\long\def\comment#1{\ifdim\overfullrule>0pt{\sf[{#1}]}\fi}
\newcommand{\ee}{\mathrm{ee} \xspace}
\newcommand{\nss}{\mathrm{nss} \xspace}
\newcommand{\clust}{\mathrm{clust} \xspace}
\newcommand{\h}{\mathrm{dense} \xspace}
\newcommand{\s}{s \xspace}
\newcommand{\tfcluster}{$(t,\s(t))$-cluster\xspace}
\newcommand{\npm}{N^{\pm}}
\newcommand{\gammaout}{\gamma^+}
\newcommand{\gammain}{\gamma^-}
\newcommand{\klcluster}{$(k,\ell)$-cluster\xspace}
\newtheorem{theorem}{Theorem}
\newtheorem{lemma}[theorem]{Lemma}
\newtheorem{conjecture}[theorem]{Conjecture}
\newtheorem{corollary}[theorem]{Corollary}
\newtheorem{observation}[theorem]{Observation}
\newtheorem{definition}[theorem]{Definition}
\numberwithin{equation}{section}
\numberwithin{theorem}{section}
\numberwithin{lemma}{section}
\numberwithin{conjecture}{section}
\numberwithin{corollary}{section}
\numberwithin{proposition}{section}
\numberwithin{observation}{section}
\numberwithin{definition}{section}
\numberwithin{question}{section}
\numberwithin{fact}{section}
\newcommand{\chiv}{\vec\chi}
\theoremstyle{remark}
\newtheorem{claim}{Claim}
\newenvironment{cproof}
{\begin{proof}
 [Proof.]
 \vspace{-1.5\parsep}
}
{ \end{proof}}
\begin{document} 
\bibliographystyle{alpha}
\def\proofend{\hfill$\Box$\medskip}
\def\Proof{\noindent{\bf Proof:\ \ }}
\def\Sketch{\noindent{\bf Sketch:\ \ }}
\def\eps{\epsilon}

\title{Bounding the chromatic number of dense digraphs by arc neighborhoods}

\author{
Felix Klingelhoefer$^*$ \and Alantha Newman\thanks{Laboratoire
    G-SCOP (CNRS, Univ. Grenoble Alpes), Grenoble, France.  Supported
    in part by ANR project DAGDigDec (ANR-21-CE48-0012).   
 \tt{firstname.lastname@grenoble-inp.fr}}}

\maketitle

\begin{abstract}
The chromatic number of a directed graph is the minimum number of
induced acyclic subdigraphs that cover its vertex set, and
accordingly, the chromatic number of a tournament is the minimum
number of transitive subtournaments that cover its vertex set.  The
neighborhood of an arc $uv$ in a tournament $T$ is the set of vertices
that form a directed triangle with arc $uv$.  We show that if the
neighborhood of every arc in a tournament has bounded chromatic
number, then the whole tournament has bounded chromatic number.  This
holds more generally for oriented graphs with bounded independence
number, and we extend our proof from tournaments to this class of
dense digraphs.  As an application, we prove the equivalence of a
conjecture of El-Zahar and Erd\H{o}s and a recent conjecture of
Nguyen, Scott and Seymour relating the structure of graphs and
tournaments with high chromatic number.
\end{abstract}

\section{Introduction}

The chromatic number of a graph is the minimum integer $k$ required to
partition its vertex set into $k$ independent sets.  The chromatic
number of a tournament (and more generally, a directed graph) is the
minimum integer $k$ required to partition its vertex set into $k$
acyclic sets.  Exploring the similarities and differences between the
two notions is a well-studied
area~\cite{erdos1989ramsey,alon2001ramsey}.

For example, if a graph has a large clique, it must have high
chromatic number.  However, the converse is far from true.  In fact, a
graph can be triangle-free, implying that the neighborhood of each
vertex is an independent set, and yet still have high chromatic
number~\cite{descartes1954k}.  In \cite{berger2013tournaments}, it was
conjectured that this phenomenon does not occur in tournaments.
Specifically, \cite{berger2013tournaments} conjectured that in a
tournament $T$, if each vertex $v \in V(T)$ has an out-neighborhood
$N^+(v)$ that induces a subtournament $T[N^+(v)]$ with bounded
chromatic number, then $T$ itself should have bounded chromatic
number.  This was proved by \cite{harutyunyan2019locToGlobal} with the
following theorem.

\begin{theorem}[\cite{harutyunyan2019locToGlobal}]\label{thm:hltw}
  There is a function $f$ such that if for all $v \in V(T)$,
$\chiv(T[N^+(v)]) \leq t$, then $\chiv(T) \leq f(t)$.
  \end{theorem}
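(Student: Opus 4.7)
The plan is to prove the equivalent contrapositive: there is a function $f$ such that if $\chiv(T) > f(t)$, then some vertex $v \in V(T)$ satisfies $\chiv(T[N^+(v)]) > t$. The starting observation is that for every vertex $v$, $V(T) = \{v\} \cup N^+(v) \cup N^-(v)$, whence
\[
\chiv(T) \leq 1 + \chiv(T[N^+(v)]) + \chiv(T[N^-(v)]).
\]
Consequently, if every out-neighborhood has $\chiv \leq t$, then every in-neighborhood satisfies $\chiv(T[N^-(v)]) \geq \chiv(T) - t - 1$. All the chromatic complexity thus sits in the in-neighborhoods, which suggests working ``against the arcs''.

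Next I would construct a \emph{dominating chain} by iterative peeling. Set $S_0 := V(T)$; given $S_{i-1}$, pick any $v_i \in S_{i-1}$ and let $S_i := S_{i-1} \cap N^-(v_i)$. Crucially $T[S_i]$ still satisfies the hypothesis with parameter $t$, since out-neighborhoods only shrink under restriction; meanwhile the decomposition inequality applied inside $T[S_{i-1}]$ gives $\chiv(T[S_i]) \geq \chiv(T[S_{i-1}]) - t - 1$. After $k$ steps this produces a transitive subtournament $v_k \to v_{k-1} \to \cdots \to v_1$ such that every vertex of $S_k$ dominates all of $v_1, \ldots, v_k$, and $\chiv(T[S_k]) \geq \chiv(T) - k(t+1)$.

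The final, and hardest, step is to exploit this chain-plus-reservoir structure to bound $k$ and hence $\chiv(T)$ purely in terms of $t$. The natural strategy is induction on $t$: for any $u \in S_k$, the subtournament $T[N^+(u)]$ contains the transitive chain $v_1, \ldots, v_k$. If the $v_i$ can be chosen carefully so that, inside some out-neighborhood $T[N^+(u)]$, the chain ``occupies'' and thereby eliminates one color class of a fixed $t$-coloring of $N^+(u)$, then the residual structure would satisfy the hypothesis with parameter $t - 1$, and the inductive hypothesis would yield $\chiv(T[N^+(u)]) \leq f(t-1) + k$, contradicting the large chromatic number of $T[S_k]$ once $k$ exceeds a threshold depending on $f(t-1)$. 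Engineering this reduction of the parameter from $t$ to $t-1$ --- forcing one color class to be absorbed by the chain, uniformly across many $u \in S_k$ --- is the key technical obstacle, and the place where a cleverer combinatorial argument (perhaps a Ramsey-type pigeonhole on the local color assignments along the chain) is needed.
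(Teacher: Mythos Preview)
The paper does not contain a proof of Theorem~\ref{thm:hltw}: it is quoted from \cite{harutyunyan2019locToGlobal} and used as input. The authors say explicitly that ``we do not give a new proof of Theorem~\ref{thm:hltw}.'' What the paper does prove is the stronger arc-neighborhood statement (Theorem~\ref{thm:edge_loc_to_glob}), and that argument rests on a different result from \cite{harutyunyan2019locToGlobal}, namely Theorem~\ref{thm:dom_set}: a tournament with large domination number contains a bounded-size subtournament of large chromatic number. That domination/cluster dichotomy is the engine behind the actual local-to-global proof, and it does not appear in your sketch.

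Your proposal is a plan rather than a proof, and the plan has a genuine gap at its decisive point. The peeling construction and the inequality $\chiv(T[S_i]) \geq \chiv(T[S_{i-1}]) - t - 1$ are correct, and they do produce a long transitive chain $v_k \to \cdots \to v_1$ sitting inside $N^+(u)$ for every $u \in S_k$. But your proposed induction mechanism---that the chain should ``occupy and thereby eliminate one color class'' of a fixed $t$-coloring of $N^+(u)$, reducing the hypothesis to parameter $t-1$---does not work as stated. A transitive chain is $1$-colorable, so its presence in $N^+(u)$ imposes no constraint on any $t$-coloring of $N^+(u)$; and even if you remove one color class from $N^+(u)$, there is no reason the out-neighborhoods within the remaining vertices should satisfy $\chiv \leq t-1$. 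You have correctly located the hard step, but what you offer there is a hope rather than an argument. The idea that actually closes the gap in \cite{harutyunyan2019locToGlobal} is the domination-number route via Theorem~\ref{thm:dom_set}, which is structurally quite different from a chain-based color-elimination scheme.
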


We say a tournament $T$ has vertex set $V(T)$ and arc set $A(T)$.  For
an arc $e=uv \in A(T)$, we define the \emph{neighborhood of arc $e$}
to contain all vertices $w$ in $V(T)$ such that $w$ forms a directed
triangle with $uv$.  Formally, we define $N(e) = N^+(v) \cap N^-(u)$.
A stronger theorem, analogous to Theorem \ref{thm:hltw}, but with
vertex out-neighborhoods replaced by arc neighborhoods, is the
following.

\begin{theorem}\label{thm:ours}
There is a function $f$ such that for any tournament $T$, if for all
$e \in A(T)$, $\chiv(T[N(e)]) \leq t$, then $\chiv(T) \leq f(t)$.
  \end{theorem}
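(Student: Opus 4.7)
My plan is to prove Theorem \ref{thm:ours} by reducing it to Theorem \ref{thm:hltw}. Specifically, I will show that the hypothesis ``every arc neighborhood has chromatic number at most $t$'' implies ``every vertex out-neighborhood has chromatic number at most $g(t)$'' for some function $g$; then Theorem \ref{thm:hltw} applied with $g(t)$ in place of $t$ yields $\chiv(T) \le f(g(t))$, as required.

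Fix $T$ and a vertex $v$, and set $S = N^+(v)$. The natural family of bounded-chromatic-number subsets of $S$ comes from the arcs ending at $v$: for each $u \in N^-(v)$, the set $N(uv) = N^+(v) \cap N^-(u)$ is a subset of $S$ with chromatic number at most $t$, and an arbitrary $x \in S$ lies in $N(uv)$ precisely when $x \to u$. Therefore the family $\mathcal F = \{N(uv) : u \in N^-(v)\}$ covers the set $S^{*} = \{x \in S : N^+(x) \cap N^-(v) \neq \emptyset\}$, and the remainder $R = S \setminus S^{*}$ consists of vertices $x \in S$ whose entire out-neighborhood lies inside $S$. I would then bound $\chiv(T[S^{*}])$ by a clustering argument, extracting a sub-family of size $k = k(t)$ that already covers $S^{*}$ chromatically, and bound $\chiv(T[R])$ either by a dual construction using arcs $xw$ with $w \in N^+(x) \cap S$ (whose arc-neighborhoods lie in $S$) or by iterating the same reduction inside the smaller sub-tournament $T[R]$, which is structurally more constrained since every vertex of $R$ dominates $N^-(v) \cup \{v\}$.

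The main obstacle is the clustering step: showing that a bounded number of members of $\mathcal F$ suffice to cover $S^{*}$ chromatically, with the bound depending only on $t$ and not on $|N^-(v)|$. Without an additional structural input one can only discard a member of $\mathcal F$ that is chromatically dominated by others, and a priori there is no reason a bounded sub-family should suffice. This is where I expect the dense-digraph machinery hinted at in the abstract (and, in the oriented-graph generalization, the bounded-independence-number hypothesis) to be essential: otherwise one could imagine configurations with many pairwise incomparable arc-neighborhoods obstructing any $O(1)$-size cover of $S^{*}$. Handling $R$ cleanly is a secondary obstacle, since its apparent recursive structure is not obviously easier than the original problem and may require extracting a second, qualitatively different family of bounded-chromatic-number subsets before the iteration terminates.
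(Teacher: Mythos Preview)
Your proposal has a genuine gap, and you have correctly identified it yourself: the ``clustering step'' of extracting a bounded-size subfamily of $\mathcal F$ that chromatically covers $S^*$ is not justified, and there is no evident reason such a subfamily should exist with size depending only on $t$. Without it, your reduction to Theorem~\ref{thm:hltw} does not go through. The treatment of $R$ is likewise left open. As written, this is a plan with two acknowledged holes rather than a proof.

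The paper does \emph{not} attempt to bound $\chiv(T[N^+(v)])$ for every vertex $v$. Instead it proceeds as follows. From the domination theorem of \cite{harutyunyan2019locToGlobal} (their tool behind Theorem~\ref{thm:hltw}), either $T$ has small dominating and absorbing sets, or $T$ contains a small strongly-connected subtournament of chromatic number at least $t+1$ (a ``jewel''). In the first case, shortest paths between each absorbing/dominating pair cover $V(T)$ by arc-neighborhoods along the path, and a modulo-$5$ palette argument gives a bounded coloring. In the second case, one takes a maximal chain of jewels $X_1 \Rightarrow \cdots \Rightarrow X_p$; the $t$-arc-bounded hypothesis forbids backward arcs in the chain, so the set complete to $X_1$ contains no jewel and hence (again by the domination theorem) has small dominating/absorbing sets and bounded chromatic number. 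This yields a \emph{single} vertex $u\in X_1$ with $\chiv(T[N^-(u)])$ bounded, and symmetrically a single $v\in X_p$ with $\chiv(T[N^+(v)])$ bounded; a shortest $u$--$v$ path then handles the rest. The key conceptual difference from your plan is that one never needs all out-neighborhoods to be small---finding one good in-vertex and one good out-vertex, together with the shortest-path/arc-neighborhood covering lemma, suffices.
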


This theorem is a special case of 13.3 in \cite{nguyen2023some}.  We
give a different proof, obtained independently, which we subsequently
extend to prove our main theorem.  Notice that the assumption that
$\chiv(T[N^+(v)]) \leq t$ for every vertex $v \in V(T)$ is stronger
than the assumption that $\chiv(T[N(e)]) \leq t$ for every arc $e \in
A(T)$.  However, our proof of Theorem \ref{thm:ours} uses a theorem
from \cite{harutyunyan2019locToGlobal}, which
they used to prove
Theorem \ref{thm:hltw}.  Thus, we do not give a new proof of Theorem
\ref{thm:hltw}.  We say a tournament $T$ is \emph{$t$-arc-bounded} if
for every arc $e \in A(T)$, $\chiv(T[N(e)])\leq t$.  We can now
restate Theorem \ref{thm:ours} as follows.
\begin{restatable}{theorem}{boundedMain}\label{thm:edge_loc_to_glob}
There is a function $f$ such that for every $t$-arc-bounded
tournament $T$, we have $\chiv(T) \leq f(t)$.
\end{restatable}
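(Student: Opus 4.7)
The plan is to reduce Theorem \ref{thm:edge_loc_to_glob} to Theorem \ref{thm:hltw}. To invoke Theorem \ref{thm:hltw} on a $t$-arc-bounded tournament $T$, it suffices to bound $\chiv(T[N^+(v)])$ by a function $g(t)$ for every $v \in V(T)$; one then obtains $\chiv(T) \leq f(g(t))$. So the whole task reduces to proving the following key lemma: for any $t$-arc-bounded $T$ and any $v \in V(T)$, $\chiv(T[N^+(v)]) \leq g(t)$.

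Fix $v$ and let $U = N^+(v)$. For each in-neighbor $u \in N^-(v)$, the arc $uv$ yields the set $N(uv) = U \cap N^-(u) \subseteq U$ of chromatic number at most $t$. The union of these sets equals the subset $U_1 \subseteq U$ of vertices that participate in a directed triangle with $v$. Its complement $U_2 = U \setminus U_1$ consists of vertices $w \in U$ such that every in-neighbor of $v$ is also an in-neighbor of $w$. The subtournament $T[U_2]$ inherits the $t$-arc-bounded property and has the additional structure that it is dominated entirely by $N^-(v)$; I expect to bound $\chiv(T[U_2])$ by a separate argument exploiting this dominance, possibly by recursion since $U_2$ sits in a part of $T$ where the arc-bounded hypothesis provides tighter control.

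The crux is bounding $\chiv(T[U_1])$. Although $U_1$ is covered by the family $\{N(uv) : u \in N^-(v)\}$ of sets of chromatic number at most $t$, this family can be arbitrarily large, and a covering by bounded-chromatic sets does not by itself bound $\chiv$ of the union. The plan is to select a subfamily of bounded size---depending only on $t$---whose union carries most of the chromatic number of $U_1$, and iterate on the remainder. The tournament structure forces relations between different in-neighbors $u$, and one hopes to identify a single $u^{*} \in N^-(v)$ such that $N(u^{*}v)$ absorbs a constant fraction of $\chiv(T[U_1])$, yielding a geometric decrease. This is precisely the step at which I would invoke the technical theorem from \cite{harutyunyan2019locToGlobal} used in their proof of Theorem \ref{thm:hltw}: that statement is designed to extract structured, bounded-size subfamilies in colored tournaments, and the arc-neighborhood family should slot into its hypotheses.

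The main obstacle I anticipate is identifying the correct lemma from \cite{harutyunyan2019locToGlobal} and verifying that the arc-bounded hypothesis on $T$ supplies exactly what it needs. A priori the vertex-bounded and arc-bounded conditions look different, and the reduction must navigate the fact that the covering sets $N(uv)$ live inside $N^+(v)$ rather than being themselves out-neighborhoods of vertices. Once the right lemma is in hand, combining the bounds on $\chiv(T[U_1])$ and $\chiv(T[U_2])$ into a single $g(t)$ and then applying Theorem \ref{thm:hltw} should be routine bookkeeping.
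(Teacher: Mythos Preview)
Your proposal has a genuine gap at the crux step. You aim to bound $\chiv(T[N^+(v)])$ for \emph{every} vertex $v$ and then invoke Theorem~\ref{thm:hltw}, but your route to bounding $\chiv(T[U_1])$ via the covering $\{N(uv):u\in N^-(v)\}$ is essentially the local-to-global step you are trying to prove. You hope that some single $u^*$ absorbs a constant fraction of the chromatic number, yet there is no mechanism for this: the sets $N(uv)$ need not interact in any controlled way, and the tool from \cite{harutyunyan2019locToGlobal} is not a subfamily-extraction lemma as you describe it. It is the statement (Theorem~\ref{thm:dom_set} here) that a tournament with large domination number contains a \emph{small} subset of large chromatic number, which does not fit your covering picture at all. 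Your treatment of $U_2$ is likewise a placeholder: the fact that every vertex of $N^-(v)$ dominates $U_2$ says nothing about $\chiv(T[U_2])$, and $T[U_2]$ is just another $t$-arc-bounded tournament, so any recursion lands you back at the original problem with no size or structural parameter decreasing.

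The paper does not attempt to bound every out-neighborhood. It uses Theorem~\ref{thm:dom_set} in a different way: either $T$ has small dominating and absorbing sets, in which case a shortest-path argument (Lemmas~\ref{lem:color5} and~\ref{lem:dom_bounded}) finishes directly, or $T$ contains a \emph{jewel}---a $(t{+}1,\ell(t{+}1))$-cluster. In the latter case one takes a maximal \emph{jewel-chain} $X_1\Rightarrow\cdots\Rightarrow X_p$; the $t$-arc-bounded hypothesis forbids backward arcs along the chain, so the set $Y$ with $Y\Rightarrow X_1$ is jewel-free and hence (again by Theorem~\ref{thm:dom_set}) has small dominating and absorbing sets, giving $\chiv(T[Y])$ bounded. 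This produces \emph{two specific} vertices $u\in X_1$ and $v\in X_p$ with $\chiv(T[N^-(u)])$ and $\chiv(T[N^+(v)])$ bounded, and one final shortest-path argument (Lemma~\ref{lem:KN23}) colors the rest. The idea you are missing is the jewel-chain: it is this structure, not a reduction to Theorem~\ref{thm:hltw}, that turns the domination-number theorem into control over the neighborhoods of particular vertices.
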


We prove Theorem \ref{thm:edge_loc_to_glob} in Section
\ref{sec:locGlobTournaments}, where in addition to the aforementioned
theorem of \cite{harutyunyan2019locToGlobal}, we use ideas from
\cite{klingelhoefer2023coloring}, originally developed to design
efficient algorithms for coloring tournaments.  In Section
\ref{sec:bounded_alpha}, we extend our proof of Theorem
\ref{thm:edge_loc_to_glob} to oriented graphs with bounded
independence number and prove our main theorem.  For the sake of
simplicity, we often refer to oriented graphs as digraphs, but in this
paper, a digraph never contains a directed 2-cycle or ``digon''.
Recall that the {\em independence number} of a digraph is the maximum
size of a vertex set that contains no arcs.  We say a digraph $D$ has
vertex set $V(D)$ and arc set $A(D)$.  As we did for tournaments, for
an arc $e=uv$, we define $N(e) = N^+(v) \cap N^-(u)$.

\begin{restatable}{theorem}{boundedAlphaMain}\label{thm:boundedAlpha}
There is a function $\h$ such that for any digraph $D$ with independence number $\alpha$, if $\chiv(D[N(e)]) \leq t$ for every arc $e \in A(D)$, then $\chiv(D) \leq \h(t,\alpha)$.  
  \end{restatable}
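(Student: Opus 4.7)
The plan is to prove Theorem~\ref{thm:boundedAlpha} by induction on the independence number $\alpha$. The base case $\alpha = 1$ coincides with Theorem~\ref{thm:edge_loc_to_glob}: any oriented graph with independence number $1$ is a tournament, so we may set $\h(t,1) := f(t)$ where $f$ is the function from that theorem.

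For the inductive step, fix $\alpha \geq 2$ and assume $\h(t,\alpha')$ has been defined so the conclusion holds for every $\alpha' < \alpha$. Let $D$ be a $t$-arc-bounded digraph with independence number at most $\alpha$. The central structural observation is that for every vertex $v \in V(D)$, the set of non-neighbors $X_v := V(D) \setminus (\{v\} \cup N^+(v) \cup N^-(v))$ induces a subdigraph of independence number at most $\alpha - 1$, because any independent set in $D[X_v]$ together with $v$ is independent in $D$. Since the $t$-arc-bounded property is hereditary (for any arc $e$ of $D[S]$, $N_{D[S]}(e) \subseteq N_D(e)$, so its chromatic number is at most $t$), the inductive hypothesis yields $\chiv(D[X_v]) \leq \h(t, \alpha - 1)$ for every $v$.

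This observation reduces the task to bounding the chromatic number of $D$ restricted to the bi-neighborhood of some vertex $v$, since $\chiv(D) \leq \chiv(D[N^+(v) \cup N^-(v)]) + \chiv(D[X_v]) + 1$, where the final $+1$ absorbs $v$ itself. I then plan to re-execute, step by step, the proof of Theorem~\ref{thm:edge_loc_to_glob} from Section~\ref{sec:locGlobTournaments} (which combines Theorem~\ref{thm:hltw} with the dense-subdigraph techniques adapted from \cite{klingelhoefer2023coloring}) inside $D[N^+(v) \cup N^-(v)]$. Wherever the tournament proof uses the fact that ``any two vertices are adjacent'', I will branch: either the two vertices are adjacent and the original reasoning goes through, or one lies in the non-neighborhood of the other, in which case the inductive bound $\h(t,\alpha - 1)$ absorbs the offending vertices into a bounded number of extra color classes. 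Carrying this out yields a recursion of the form $\h(t,\alpha) \leq g\bigl(t, \h(t,\alpha - 1)\bigr)$ for some explicit function $g$.

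The main obstacle is the last step: extending the dense-subdigraph argument to the bounded-$\alpha$ setting. In the tournament proof, an arc $uv$ partitions $V(T) \setminus \{u,v\}$ into four clean quadrants determined by signs in $N^\pm(u)$ and $N^\pm(v)$, one of which is exactly $N(uv)$, and triangle-counting across these quadrants drives the argument. In the bounded-independence regime each quadrant acquires a ``non-neighbor slack'' of vertices missing an arc to $u$ or to $v$; the counting identities must be redone while controlling this slack via the induction hypothesis. I expect this to be a matter of careful bookkeeping rather than a genuinely new idea, but it is the delicate step on which the whole extension rests.
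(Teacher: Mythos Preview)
Your proposal has genuine gaps. The ``reduction'' $\chiv(D)\le \chiv(D[N^+(v)\cup N^-(v)])+\chiv(D[X_v])+1$ does not simplify the problem: the subdigraph $D[N^+(v)\cup N^-(v)]$ can still have independence number $\alpha$ and is still only $t$-arc-bounded, so you are back where you started. More seriously, the plan to re-run the tournament proof and ``branch'' on non-adjacency breaks at its two load-bearing steps. In the shortest-path covering (Lemmas~\ref{lem:dom-by-path}--\ref{lem:color5}), a vertex $w\in N^-(v_0)\cap N^+(v_k)$ lands in some $N(e_{i+1})$ only because $w\notin N^-(v_{i+1})$ forces $w\in N^+(v_{i+1})$ in a tournament; in a digraph $w$ may instead be a non-neighbor of $v_{i+1}$, and since the path length is unbounded you cannot absorb the non-neighbor sets of all path vertices into boundedly many extra colors. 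And the dichotomy ``small dominating and absorbing sets, or a jewel'' (Corollary~\ref{cor:uset}) rests on Theorem~\ref{thm:dom_set}, a tournament statement you never say how to replace. Finally, your last paragraph describes the tournament proof as partitioning into ``four quadrants'' with ``triangle-counting'' driving the argument; that is not what Section~\ref{sec:locGlobTournaments} does, so the bookkeeping you anticipate is aimed at the wrong argument.

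The paper's route is different. It completes $D$ to a tournament $T=(V,A\cup B)$, orienting each non-edge $u,v$ so that either $N_A^+(v)\cap N_A^-(u)$ is empty or both $N_A^+(v)\cap N_A^-(u)$ and $N_A^+(u)\cap N_A^-(v)$ are nonempty; this specific orientation is what makes Claim~\ref{clm:neighborhood} go through, namely that $\chiv(D[N_T(e)])$ is bounded in terms of $t$ and $\h(t,\alpha-1)$ for \emph{every} arc of $T$, including the artificial ones in $B$. Your correct observation about $X_v$ having independence number $\alpha-1$ is exactly what is spent here, on the sets $N_B^-(u)$, $N_B^+(v)$, and $N_A^o(w)$. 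The shortest-path and jewel-chain machinery is then run entirely inside the tournament $T$, with all chromatic numbers measured in $D$, so no branching is needed. The domination dichotomy is supplied by a genuine extension of Theorem~\ref{thm:dom_set} to digraphs of bounded independence number (Theorem~\ref{thm:dom_alpha} and Corollary~\ref{cor:uset_alpha}), proved separately; this is a standalone result, not a bookkeeping adjustment, and is indispensable.
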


As an application of Theorem \ref{thm:boundedAlpha}, we prove the equivalence of
two conjectures, one on graphs with high chromatic number and one on
tournaments with high chromatic number.  The first one, concerning
graphs, was originally posed by \cite{el1985existence} in the form of
an open problem, which asks if the following conjecture is true.

\begin{conjecture}
[\cite{el1985existence}]\label{conj:ee} For all integers $t,c \geq 1$,
there exists $d \geq 1$, such that if a graph $G$ satisfies $\chi(G)
\geq d$, and has no clique with $t$ vertices (i.e., $\omega(G) < t$),
then there are subsets $A, B \subseteq V(G)$ with $\chi(G[A]),
\chi(G[B]) \geq c$, such that there are no edges between $A$ and $B$.
\end{conjecture}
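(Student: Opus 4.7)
The plan is to attempt Conjecture \ref{conj:ee} through Theorem \ref{thm:boundedAlpha}, by constructing from $G$ an auxiliary digraph whose arc-neighborhood hypothesis reflects the absence of anticomplete high-chromatic subsets. I would work in contrapositive form: assume $\omega(G) < t$ and that for every pair of anticomplete sets $A, B \subseteq V(G)$, at least one of $\chi(G[A]), \chi(G[B])$ is strictly less than $c$. The goal is then a bound $\chi(G) \leq d(t,c)$.

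Next I would orient the complement $\bar{G}$ (say, transitively along some linear order on $V(G)$) to obtain an oriented graph $D$ on $V(G)$. Because $\alpha(D) \leq \alpha(\bar{G}) = \omega(G) < t$, the independence number of $D$ is bounded in terms of $t$, which fits the hypothesis of Theorem \ref{thm:boundedAlpha}. The central technical step would then be to prove that for every arc $e = uv$ of $D$, we have $\chi(D[N(e)]) \leq s$ for some function $s = s(c)$. The intended heuristic is that a very colourful arc neighbourhood $N(e)$ in $D$ should, after undoing the orientation and passing back to $G$, yield two anticomplete vertex sets both of chromatic number at least $c$, contradicting our hypothesis on $G$. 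Granted such a local bound, Theorem \ref{thm:boundedAlpha} would give $\chi(D) \leq \h(s,t)$, from which a Gallai--Roy-style argument would transfer the bound to $\chi(G)$.

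The main obstacle is that Conjecture \ref{conj:ee} has been open since 1985, and the equivalence proved later in this paper shows it coincides in strength with the Nguyen--Scott--Seymour conjecture, which is also open. The bottleneck is precisely the arc-neighbourhood step above: there is no known mechanism for converting local chromatic richness in an orientation of $\bar{G}$ into a pair of anticomplete high-chromatic subsets in $G$, and designing such a mechanism is essentially the content of the conjecture itself. A direct application of Theorem \ref{thm:boundedAlpha} along these lines therefore risks merely translating Conjecture \ref{conj:ee} into the tournament setting, exactly as the equivalence in the present paper already makes precise. Any genuine proof would require a structural insight beyond the local-to-global machinery developed here, probably one that either forces a rich $\chi$-bounded ``local'' substructure inside an arbitrary graph with no anticomplete colourful pair, or that sidesteps the auxiliary digraph entirely by a direct extremal argument on $\chi$-critical graphs with bounded clique number.
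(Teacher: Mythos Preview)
The statement you were asked to prove is Conjecture~\ref{conj:ee}, the El-Zahar--Erd\H{o}s conjecture. The paper does \emph{not} prove this statement; it is recorded there precisely as an open conjecture, and the paper's contribution in Section~\ref{sec:equiv} is only to show that it is equivalent to Conjecture~\ref{conj:big_to_big}. There is therefore no ``paper's own proof'' to compare your attempt against.

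Your proposal is not actually a proof but a meta-discussion: you outline a plausible strategy (orient $\bar G$, bound arc-neighbourhood chromatic numbers, invoke Theorem~\ref{thm:boundedAlpha}) and then correctly diagnose why it cannot be completed with current tools. Your analysis is accurate. In particular, you are right that the crucial step --- deducing a bound on $\chiv(D[N(e)])$ from the absence of anticomplete high-chromatic pairs in $G$ --- is not available, and that attempting it essentially reproduces the equivalence the paper already establishes rather than resolving the conjecture. So your write-up is a sound assessment of the situation, but it is not, and does not claim to be, a proof of the conjecture.
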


The second conjecture, concerning tournaments, was recently stated by
\cite{nguyen2023problem}.

\begin{conjecture}[\cite{nguyen2023problem}]\label{conj:big_to_big}
For all $c \geq 0$, there exists $d \geq 0$ such that if $T$ is a
tournament with $\chiv(T) \geq d$, there are two sets $A, B \subseteq
V(T)$ such that $\chiv(T[A]), \chiv(T[B]) \geq c$ and all arcs between
$A$ and $B$ go from vertices of $A$ to vertices of $B$.
\end{conjecture}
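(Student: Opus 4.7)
The plan is to extend the tournament proof of Theorem \ref{thm:edge_loc_to_glob} from Section \ref{sec:locGlobTournaments} to digraphs with bounded independence number by induction on $\alpha$. The base case $\alpha = 1$ forces $D$ to be a tournament, so we may take $\h(t, 1) = f(t)$ from Theorem \ref{thm:edge_loc_to_glob}. For the inductive step, fix $\alpha \geq 2$ and assume the statement holds with $\h(t, \alpha - 1)$.

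A natural first move is the following non-neighbor reduction. For any vertex $v \in V(D)$, let $M(v) = V(D) \setminus (N^+(v) \cup N^-(v) \cup \{v\})$ be the set of vertices joined to $v$ by no arc in either direction. Since $\{v\} \cup S$ is an independent set of $D$ whenever $S \subseteq M(v)$ is independent, we have $\alpha(D[M(v)]) \leq \alpha - 1$, and the arc-neighborhoods of $D[M(v)]$ are contained in those of $D$, so $D[M(v)]$ is still $t$-arc-bounded. The inductive hypothesis therefore gives $\chi(D[M(v)]) \leq \h(t, \alpha - 1)$. Unfortunately this reduction alone is insufficient, since the closed neighborhood $N^+(v) \cup N^-(v) \cup \{v\}$ generally still has independence number $\alpha$, so naive recursion does not decrease $\alpha$. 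The non-neighbor reduction will instead serve as a subroutine used whenever the main argument needs to absorb an independent set.

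The real work is to adapt the tournament proof of Theorem \ref{thm:edge_loc_to_glob} step by step, identifying every place where it uses the fact that any two vertices of a tournament are joined by an arc, and replacing each such use by the Ramsey-type observation available under bounded independence number: any vertex set of size larger than $\alpha$ contains an arc. When the tournament argument ``picks the arc between $u$ and $w$'', one now either genuinely finds an arc (and proceeds as before) or else locates an independent set of size at most $\alpha$ that is stripped off and colored with fresh colors through one application of the non-neighbor reduction above; the cost per stripping is bounded by $\h(t, \alpha - 1)$. Since the tournament proof also invokes Theorem \ref{thm:hltw} as a black box, a preliminary step is to establish a bounded-$\alpha$ analogue of Theorem \ref{thm:hltw} stating that if every $\chiv(D[N^+(v)]) \leq t$ in a digraph of independence number $\alpha$, then $\chiv(D)$ is bounded by a function of $t$ and $\alpha$; this can be proved by the same $\alpha$-induction using the non-neighbor reduction.

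The main obstacle is replaying the structural lemma from \cite{klingelhoefer2023coloring} that drives the tournament proof. That lemma exploits completeness in several places at once, notably when enumerating the directed triangles through a chosen arc and when extracting the dense bipartite patterns to which the arc-neighborhood hypothesis is applied. In the bounded-$\alpha$ setting, each such step incurs a loss that must be compensated by an extra application of the inductive hypothesis, and the bookkeeping of these losses is what ultimately determines $\h(t, \alpha)$. I expect $\h$ to grow rapidly in $\alpha$ even when the tournament bound $f(t)$ is comparatively mild, but this is consistent with the qualitative form of Theorem \ref{thm:boundedAlpha}.
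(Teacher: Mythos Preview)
Your proposal addresses the wrong statement. What you have written is a plan for proving Theorem~\ref{thm:boundedAlpha} (the arc local-to-global theorem for digraphs with bounded independence number), not Conjecture~\ref{conj:big_to_big}. Everything in your outline---the induction on $\alpha$, the base case $\alpha=1$ reducing to tournaments via $f(t)$, the non-neighbor reduction $M(v)$ with $\alpha(D[M(v)])\le\alpha-1$, the adaptation of the structural lemmas from Section~\ref{sec:locGlobTournaments}---is aimed at bounding $\chiv(D)$ for a $t$-arc-bounded digraph $D$ of independence number $\alpha$. That is Theorem~\ref{thm:boundedAlpha}, and indeed the paper proves it along broadly similar lines in Section~\ref{sec:bounded_alpha} (though with a specific trick you do not mention: it completes $D$ to an auxiliary tournament $T=(V,A\cup B)$ by orienting each non-edge so that Claim~\ref{clm:neighborhood} goes through, rather than repeatedly ``stripping off'' independent sets).

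Conjecture~\ref{conj:big_to_big}, by contrast, is not proved unconditionally anywhere in the paper; it remains open. What the paper does in Section~\ref{sec:equiv} is prove it \emph{assuming} the El-Zahar--Erd\H{o}s Conjecture~\ref{conj:ee}. That argument is of an entirely different nature from your plan: it proceeds by induction on the cluster parameter $t$, splits the arcs of the tournament into $t$-heavy and $t$-light arcs, shows the heavy-arc graph $G_H$ is $K_t$-free (Claim~\ref{clm:heavy_clique}) and hence the light-arc digraph $D_L$ has bounded independence number (Claim~\ref{clm:light_ind}), applies Theorem~\ref{thm:boundedAlpha} to color $D_L$, and then invokes Conjecture~\ref{conj:ee} on the backedge graph of each color class to either bound its chromatic number or produce the desired pair $A,B$. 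Theorem~\ref{thm:boundedAlpha} is a \emph{tool} in this argument, not the conclusion. Your proposal contains no mechanism for producing the two sets $A,B$ complete from $A$ to $B$, and no use of the El-Zahar--Erd\H{o}s hypothesis, so as a proof of Conjecture~\ref{conj:big_to_big} it does not get started.
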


\cite{nguyen2023some} show that Conjecture \ref{conj:big_to_big}
implies Conjecture \ref{conj:ee}.  They explore the possibility of the
converse being true, but they do not prove it and write that
Conjecture \ref{conj:big_to_big} seems to be strictly stronger than
Conjecture \ref{conj:ee}.  In Section \ref{sec:equiv}, we prove that
Conjecture \ref{conj:ee} does in fact imply Conjecture
\ref{conj:big_to_big}, showing that the two conjectures are
equivalent.

\setcounter{claim}{0}

\section{Arc local-to-global for tournaments}\label{sec:locGlobTournaments}

In this section, we prove Theorem \ref{thm:edge_loc_to_glob}.  Since
our goal is to color a tournament $T$, we can assume that $T$ is
strongly connected; otherwise $T$ can be partitioned into strongly
connected parts, and each one can be colored separately.  A {\em
  dominating set} (respectively, {\em absorbing set}) in $T$ is a set
of vertices $S \subset V$ such that for every $v \in V\setminus{S}$,
there is a $u \in S$ such that $uv$ (respectively, $vu$) is an arc in
$T$.  By {\em domination number}, we mean the minimum size of a
dominating set.  We will use the following theorem from
\cite{harutyunyan2019locToGlobal}.

\begin{theorem}[\cite{harutyunyan2019locToGlobal}]\label{thm:dom_set}
For every constant $k$, there exist constants $K$ and $\ell$ such that
every tournament $T$ with domination number at least $K$ has a subset
of size $\ell$ that induces a tournament with chromatic number at
least $k$.
\end{theorem}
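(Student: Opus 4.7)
The plan is to argue by induction on $k$, building constants $K(k)$ and $\ell(k)$ simultaneously; the base case $k = 1$ is immediate with $K(1) = \ell(1) = 1$.

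Before the inductive step, I will establish two tools. The first is a \emph{peeling lemma}: for any subset $W \subseteq V(T)$, the common in-neighborhood $N^-(W) := \bigcap_{w \in W} N^-(w)$ satisfies $\mathrm{dom}(T[N^-(W)]) \geq \mathrm{dom}(T) - |W|$. Indeed, any dominating set $D$ of $T[N^-(W)]$ together with $W$ dominates all of $T$: any vertex $v \notin N^-(W) \cup W$ has some $w \in W$ with $w \to v$ (since $v \notin N^-(w)$ forces $w \to v$ in the tournament). The second is a \emph{cycle boosting lemma}: if $S_1, \ldots, S_k$ are pairwise disjoint with $\chiv(T[S_i]) \geq k-1$ for each $i$, and all arcs between $S_i$ and $S_{i+1}$ (indices mod $k$) go from $S_i$ to $S_{i+1}$, then $\chiv(T[\bigcup_i S_i]) \geq k$. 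The proof is by contradiction: in any $(k-1)$-coloring, each color class must meet every $S_i$ (since $\chiv(T[S_i]) \geq k-1$ forces every color to appear in $S_i$), and the cyclic arc pattern then produces a directed cycle inside some color class.

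For the inductive step, starting from $T$ with $\mathrm{dom}(T) \geq K(k)$, I iteratively apply the peeling lemma together with the induction hypothesis to extract many disjoint $(k-1)$-chromatic subsets $W_1, \ldots, W_m$ of size at most $\ell(k-1)$, with each extraction costing at most $\ell(k-1)$ in the domination budget. For $m$ sufficiently large, I apply a Ramsey-type argument on the arc patterns between pairs $(W_i, W_j)$, which lie in a bounded alphabet of size at most $2^{\ell(k-1)^2}$, to obtain a sub-collection with homogeneous inter-set arc structure. From this homogeneous structure I aim to locate $k$ sets that realize the cyclic arc configuration required by the cycle boosting lemma, yielding a bound of the form $\ell(k) = k \cdot \ell(k-1)$ with $K(k)$ growing like an iterated Ramsey-type tower in $k$.

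The main obstacle is producing the required cyclic configuration. The naive iterative peeling forces a transitive arrangement $W_m \to W_{m-1} \to \cdots \to W_1$ (all arcs from later to earlier), whose substitution-type structure keeps the chromatic number at $k-1$ rather than boosting it. To overcome this, the extraction of the $W_i$'s must be intertwined with the Ramsey step so that the pieces come from non-nested neighborhoods of $T$, for instance by maintaining two competing ``reservoirs'' of candidate vertices and alternating extractions. I expect the heart of the argument to be showing that for $K(k)$ chosen large enough, either a cyclic $k$-arrangement emerges directly, or the arc pattern among the $W_i$'s is so constrained that $T$ itself admits a small dominating set, contradicting the hypothesis.
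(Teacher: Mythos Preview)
Your peeling lemma and cycle-boosting lemma are both correct, but you have not closed the gap you yourself flag. Iterated peeling produces $(k-1)$-clusters $W_1,\ldots,W_m$ arranged \emph{transitively} (each later one complete to all earlier ones), and this pattern is already homogeneous: no Ramsey-type refinement of a transitive family can manufacture a directed $k$-cycle among the parts. Your proposed fix---``intertwining'' the extraction with the Ramsey step, or maintaining ``two competing reservoirs''---is not a proof but a hope, and the fallback clause (``the arc pattern is so constrained that $T$ admits a small dominating set'') is precisely the statement that needs proving.

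The argument the paper follows (see Appendix~\ref{app:dom_lemma}, which proves the generalization Theorem~\ref{thm:dom_alpha} by the method of \cite{harutyunyan2019locToGlobal}) avoids this obstacle by a different mechanism: it exploits the \emph{minimality} of a dominating set $B$. Fix a large $W\subseteq B$ and extract one $(k-1)$-cluster $A\subseteq V\setminus N^+[W]$, so $A$ is complete to $W$. The key new step is that for every moderately large $S\subseteq W$, the set $N^+(S)$ has large domination number: otherwise a small dominating set of $N^+(S)$ together with a single vertex of $A$ (which dominates all of $S$) would replace $S$ inside $B$ by something smaller, contradicting minimality. Hence $N^+(S)\setminus N^+[A]$ contains a $(k-1)$-cluster $A_S$, complete to $A$. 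Now set $A'=A\cup W\cup\bigcup_S A_S$. In any $(k-1)$-colouring of $A'$, pigeonhole gives a monochromatic $S\subseteq W$ of the required size; vertices of that colour in $A$ and in $A_S$, together with a suitable vertex of $S$, form a monochromatic directed triangle. So one never needs a cyclic arrangement of $k$ clusters---a single triangle-producing gadget, enabled by the minimality of $B$, suffices.
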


Following the notation in \cite{aboulker2022heroes}, we define a
\emph{\klcluster} to be a set of vertices $S$ such that $\chiv(T[S])
\geq k$, $|S| \leq \ell$ and $T[S]$ is strongly connected.  This
notion is directly related to the previous theorem, which can be
restated for our purposes as follows.

\begin{corollary}\label{cor:uset}
There exist functions $K$ and $\ell$ such that for every integer $t
\geq 1$, every tournament $T$ contains either i) a dominating set and
an absorbing set, each of size at most $K(t)$, or ii) a
$(t,\ell(t))$-cluster.
\end{corollary}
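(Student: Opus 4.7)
The plan is to apply Theorem \ref{thm:dom_set} once, with parameter $k = t$, to obtain constants $K_0$ and $\ell_0$ depending only on $t$. Given a tournament $T$, I would branch on whether both its domination number and its absorption number are at most $K_0$. If they are, option (i) of the corollary holds with $K(t) = K_0$. Otherwise, at least one of the two numbers exceeds $K_0$, and the task is to extract a \klcluster of size at most $\ell(t) = \ell_0$.

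First suppose $T$ has domination number at least $K_0$. Then Theorem \ref{thm:dom_set} directly supplies a vertex set $S \subseteq V(T)$ with $|S| \leq \ell_0$ and $\chiv(T[S]) \geq t$. If instead the absorption number of $T$ is at least $K_0$, I would work with the reverse tournament $T'$ obtained by flipping every arc. A dominating set of $T'$ is an absorbing set of $T$ and vice versa, so $T'$ has domination number at least $K_0$, and Theorem \ref{thm:dom_set} applied to $T'$ yields a set $S$ with $|S| \leq \ell_0$ and $\chiv(T'[S]) \geq t$. A subset of vertices is acyclic in $T$ if and only if it is acyclic in $T'$, so $\chiv(T[S]) = \chiv(T'[S]) \geq t$, and the same $S$ works inside $T$.

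The remaining step, and the only point that needs real care, is to upgrade $S$ to a strongly connected set, since Theorem \ref{thm:dom_set} offers no such guarantee. The key observation is that if $T[S]$ is decomposed into strongly connected components $C_1, \ldots, C_r$, then $\chiv(T[S]) = \max_i \chiv(T[C_i])$: an optimal acyclic partition of each $C_i$ can be assembled into a global acyclic partition of $S$ using the same set of colors, because every directed cycle of $T[S]$ lives inside a single strong component. Choosing a component $C_i$ with $\chiv(T[C_i]) \geq t$ yields a strongly connected set of size at most $|S| \leq \ell_0$ with chromatic number at least $t$, i.e., a $(t, \ell_0)$-cluster. Aside from the symmetric treatment of absorbing sets via arc reversal, the argument is essentially a bookkeeping exercise on top of Theorem \ref{thm:dom_set}, and I do not anticipate any substantive obstacle.
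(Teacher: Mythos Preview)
Your proposal is correct and follows essentially the same approach as the paper's proof: apply Theorem~\ref{thm:dom_set} with $k=t$, handle the absorbing-set case by passing to the reversed tournament (noting that arc reversal preserves chromatic number), and pass to a strongly connected component of the resulting high-chromatic subset. You spell out the component step (that $\chiv(T[S]) = \max_i \chiv(T[C_i])$ over strong components) more explicitly than the paper does, but the argument is the same.
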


\begin{proof}
Let $t$ be a constant.  By Theorem \ref{thm:dom_set}, there exist
constants $K(t)$ and $\ell(t)$ such that one can find either a
dominating set of size at most $K(t)$, or a subset of size $\ell(t)$
with chromatic number $t$.  If this subset is not strongly connected,
we can find a strongly connected subset with chromatic number $t$.
Then take the tournament obtained by reversing all the arcs in $T$ and
repeat the previous argument. A dominating set in this tournament is
an absorbing set in $T$, while a subset with high chromatic number
would also have high chromatic number in $T$, as reversing all the
arcs preserves the chromatic number.
\end{proof}

Now let us fix a constant $t$.  Using the function $\ell$ from
Corollary \ref{cor:uset}, we define a \emph{jewel} to be a $(t+1,
\ell(t+1))$-cluster.  We will use the fact that in a $t$-arc-bounded
tournament, for any arc $e$, the vertex set $N(e)$ does not contain a
jewel.\footnote{We will redefine a jewel in Section
\ref{sec:bounded_alpha}, but it will have the same purpose.}  We now
present some useful tools for coloring $t$-arc-bounded tournaments.

\subsection{Jewels and other tools for coloring $t$-arc-bounded tournaments}

We begin with a decomposition lemma for tournaments. 
\begin{lemma}\label{lem:dom-by-path}
Let $T$ be a tournament, and let $P = (v_0, v_1, \ldots, v_k)$ be a
shortest path in $T$ from $v_0$ to $v_k$ with arcs $e_i =
v_{i-1}v_{i}$ for $i:1 \leq i \leq k$.  Then we have the following
properties.
\begin{enumerate}
\item Each vertex in $N^-(v_0) \cap N^+(v_k)$ belongs to $N(e_i)$ for some
  $i: 1 \leq i \leq k$.

  \item If $k \geq 3$, then each vertex in
    $V(P)$ belongs to $N(e_i)$ for some $i:1 \leq i \leq k$.

    \item If $k=2$, then
      $v_0$ belongs to $N(e_2)$ and $v_2$ belongs to $N(e_1)$.
      \end{enumerate}
\end{lemma}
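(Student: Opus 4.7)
The key structural fact is the shortcut-avoidance property of a shortest path: for any indices $p < q-1$ on $P$, the arc between $v_p$ and $v_q$ must be $v_q \to v_p$, since the alternative $v_p \to v_q$ would yield a strictly shorter walk $v_0,\ldots,v_p,v_q,\ldots,v_k$ from $v_0$ to $v_k$. I would state this observation first, since together with the path arcs $v_{i-1}v_i$ it drives all three parts.

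For part 1, take $w \in N^-(v_0) \cap N^+(v_k)$. If $w \notin V(P)$, I use an ``intermediate value'' argument on the orientation of the arc between $w$ and $v_i$: it starts as $w \to v_0$ and ends as $v_k \to w$, so letting $i$ be the smallest index with $v_i \to w$ (which exists since $v_k \to w$) gives $i \geq 1$, and by minimality together with the tournament property applied to the pair $\{w, v_{i-1}\}$, we get $w \to v_{i-1}$; this is exactly $w \in N(e_i)$. If instead $w = v_j \in V(P)$, then $v_j \to v_0$ and $v_k \to v_j$ force $2 \leq j \leq k-2$ by shortcut-avoidance, and then the witness arc $e_{j+2}$ gives $v_{j+2} \to v_j$ (shortcut-avoidance) and $v_j \to v_{j+1}$ (path arc), so $v_j \in N(e_{j+2})$.

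For parts 2 and 3, I exhibit witnesses directly. With $k \geq 3$, for $0 \leq i \leq k-2$ the arc $e_{i+2}$ works ($v_{i+2} \to v_i$ by shortcut-avoidance and $v_i \to v_{i+1}$ is a path arc), and for $i \in \{k-1, k\}$ the arc $e_{i-1}$ works ($v_{i-1} \to v_i$ is a path arc and $v_i \to v_{i-2}$ by shortcut-avoidance, valid because $k \geq 3$ makes $i - 2 \geq 0$). In the degenerate case $k = 2$, the only shortcut arc is $v_2 \to v_0$, and combining it with the two path arcs immediately yields $v_0 \in N(e_2)$ and $v_2 \in N(e_1)$. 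I do not expect a real obstacle: once the shortcut property is isolated, the lemma reduces to a short case analysis, and the only care required is handling the boundary indices and the small values $k \leq 2$.
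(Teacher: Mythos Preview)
Your proof is correct and follows essentially the same approach as the paper: both use the shortcut-avoidance property of shortest paths, and for part~1 both pick an extremal index along the path (you take the minimum $i$ with $v_i\to w$, the paper takes the maximum $i$ with $w\to v_i$), yielding the same witness arc. Your separate treatment of the case $w\in V(P)$ in part~1 is unnecessary---the extremal-index argument already covers it---but it does no harm; parts~2 and~3 match the paper's $v_i\in N(e_{i+2})$ and $v_i\in N(e_{i-1})$ observations verbatim.
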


\begin{proof}
First consider a vertex $v$ in $N^-(v_0) \cap N^+(v_k)$ and let $i$ be
the maximum index such that $v \in N^-(v_i)$.  Some $v_i$ must exist,
since $v \in N^-(v_0)$.  Then $v \in N(e_{i+1})$.  Next, consider a
vertex $v \in V(P)$.  Notice that all arcs between vertices $V(P)$
that are not adjacent in $P$ must go backward.  It follows that $v_{i}
\in N(e_{i+2})$ and $v_{i} \in N(e_{i-1})$.  When $k \geq 3$, we can
conclude that each $v_i$ belongs to $N(e_j)$ for some $j$ such that $1
\leq j \leq k$.  When $k = 2$, the same argument applies, except now
$v_1$ belongs neither to $N(e_1)$ nor to $N(e_2)$.
\end{proof}

\begin{lemma}\label{lem:color5}
Let $T$ be a $t$-arc-bounded tournament.  Suppose that $P = (v_0, v_1,
\ldots, v_k)$ is a shortest path from $v_0$ to $v_k$, and let $S =
(N^-(v_0) \cap N^+(v_k)) \cup V(P)$. Then $T[S]$ can be colored with
at most $5t$ colors.
\end{lemma}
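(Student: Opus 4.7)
I plan to split the proof on the length $k$ of $P$. The cases $k=1$ and $k=2$ are straightforward: in the first, $S=N(e_1)\cup\{v_0,v_1\}$ uses at most $t+2$ colors, and in the second, Lemma~\ref{lem:dom-by-path} gives $S\setminus\{v_1\}\subseteq N(e_1)\cup N(e_2)$ so at most $2t+1$ colors suffice. Both quantities are bounded by $5t$ once $t\geq 1$.

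For $k\geq 3$, every vertex of $S$ lies in some $N(e_i)$ with $1\leq i\leq k$ (by Lemma~\ref{lem:dom-by-path} for vertices of $N^-(v_0)\cap N^+(v_k)$, and by direct inspection --- $v_a\in N(e_{a+2})$ when $a+2\leq k$ and $v_a\in N(e_{a-1})$ when $a\geq 2$ --- for path vertices). Since each $T[N(e_i)]$ is acyclically $t$-colorable by the $t$-arc-boundedness hypothesis, the task is to reuse the same palette across the $k$ arcs without creating monochromatic directed cycles. The key geometric input, extracted from the minimality of $P$, is the following pair of observations for $i<j$: (i) if $w\in N(e_i)\cap N(e_j)$, the walk $v_0,\ldots,v_i,w,v_{j-1},\ldots,v_k$ has length $k+i-j+3$, forcing $j-i\leq 3$; and (ii) if $w_1\in N(e_i)$, $w_2\in N(e_j)$ with $w_1\to w_2$, the walk $v_0,\ldots,v_i,w_1,w_2,v_{j-1},\ldots,v_k$ has length $k+i-j+4$, forcing $j-i\leq 4$. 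Together, these show that whenever $j-i\geq 5$ we have $N(e_i)\cap N(e_j)=\emptyset$ and every arc between these two sets points from $N(e_j)$ to $N(e_i)$.

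With these tools in hand, I would color the arcs by $c(e_i)=i\bmod 5$. For each $v\in S$ let $i(v)$ be the smallest index with $v\in N(e_i)$, and set $V_c=\{v : c(e_{i(v)})=c\}$; the five sets $V_0,\ldots,V_4$ partition $S$. Fix an acyclic $t$-coloring $\psi_i$ of each $T[N(e_i)]$, and color $V_c$ with the $t$ colors $\{(c,1),\ldots,(c,t)\}$ by assigning $v\in V_c$ the color $(c,\psi_{i(v)}(v))$. This uses $5t$ colors in total. To verify that each color class is acyclic in $T[S]$: take a hypothetical monochromatic directed cycle inside $V_c$ and let $i^*$ be the maximum value of $i(v)$ among its vertices; by observation (ii) and the spacing $|i-j|\geq 5$ between same-colored arcs, the predecessor on the cycle of any vertex of index $i^*$ must itself have index $i^*$. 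Iterating, the whole cycle sits inside $\{v : i(v)=i^*\}\subseteq N(e_{i^*})$, contradicting acyclicity of $\psi_{i^*}$.

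The delicate step --- and the source of the factor $5$ rather than $4$ --- is observation (ii): mere disjointness ($j-i\geq 4$) would not prevent a monochromatic directed cycle from weaving through two $N(e_i)$'s via cross arcs in both directions, so it is the one-way orientation forced by $j-i\geq 5$ that lets the independently chosen acyclic colorings glue together. Everything else reduces to arithmetic on walk lengths and inspection of the small cases.
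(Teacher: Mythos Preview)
Your proof is correct and follows essentially the same approach as the paper's: splitting on $k$, covering $S$ by the sets $N(e_i)$ for $k\geq 3$, using the shortest-path property to forbid forward arcs of length at least five, and coloring with five palettes of $t$ colors each indexed modulo~$5$. Your write-up is more explicit than the paper's on two points---resolving the ambiguity when a vertex lies in several $N(e_i)$'s via the choice of the minimal index $i(v)$, and spelling out the walk-length arithmetic behind observations (i) and (ii)---but these are elaborations of the same argument rather than a different route.
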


\begin{proof}
If $k \geq 3$, then each vertex in $S$ belongs to $N(e_i)$ for some
$i:1 \leq i \leq k$.  For $i < j$, 
we say an arc from a vertex in $N(e_i)$ to a vertex in
$N(e_j)$ is {\em forward} with {\em length} $j-i$.
Observe that there are no arcs from $N(e_i)$ to $N(e_{j})$ for $j \geq i+5$,
since this would give a shorter path from $v_0$ to $v_k$.  Thus, we
can color all the vertices in $S$ using five color palettes of $t$
colors each, using one color palette for each $N(e_i)$ assigned modulo
$5$.
Since all forward arcs have length at most four, each cycle with
vertices belonging to different $N(e_i)$'s has at least two
different colors.  Finally, if $k = 2$, then $T[S]$ can be colored
with $2t+1$ colors, and if $k=1$, then $T[S]$ can be colored with $t +
2$ colors.
\end{proof}

Lemma \ref{lem:color5} can be used to prove the following two lemmas, which we will apply shortly to prove Theorem \ref{thm:edge_loc_to_glob}.

\begin{lemma}\label{lem:KN23}
  Let $T$ be a $t$-arc-bounded tournament, containing 
  two vertices $u$ and
  $v$ such that $\chiv(T[N^+(u)]) \leq g(t)$ and $\chiv(T[N^-(v)])
  \leq g(t)$ for some function $g$.  Then $\chiv(T) \leq 2\cdot g(t) + 5t$.
  \end{lemma}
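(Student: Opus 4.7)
My plan is to partition $V(T)$ into three sets that can be colored independently from three disjoint palettes of sizes $g(t)$, $g(t)$, and $5t$. The two ``easy'' pieces come straight from the hypothesis: $N^+(u)$ and $N^-(v)$ each induce a subtournament of chromatic number at most $g(t)$, so each can be colored using its own palette of $g(t)$ colors.

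The work is in covering everything else. I first observe that a vertex $w \in V(T) \setminus (N^+(u) \cup N^-(v))$ satisfies $w \notin N^+(u)$ and $w \notin N^-(v)$; since $T$ is a tournament (and thus has no digons), this forces $w \in \{u,v\}$ or $w \in N^-(u) \cap N^+(v)$. I will then exploit the fact that $T$ is strongly connected (assumed at the start of the section) to fix a shortest directed path $P$ from $u$ to $v$, so that every leftover vertex lies in $S := (N^-(u) \cap N^+(v)) \cup V(P)$, where $u$ and $v$ themselves are absorbed into $V(P)$. Applying Lemma \ref{lem:color5} with $v_0 = u$ and $v_k = v$ will give $\chiv(T[S]) \leq 5t$, providing a coloring of the leftover set from a fresh palette of $5t$ colors.

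Combining the three colorings yields a proper acyclic coloring of $T$ with $2g(t) + 5t$ colors, as required. The argument is essentially a clean set-theoretic reduction to the setting of Lemma \ref{lem:color5}, so there is no real obstacle; the only step that deserves a moment of care is matching orientations so that the choice $v_0 = u$, $v_k = v$ produces the ``in-neighborhood of $u$ intersect out-neighborhood of $v$'' set that genuinely complements $N^+(u) \cup N^-(v)$, and confirming that $u$ and $v$ are themselves covered by $V(P)$ rather than being accidentally left out of the partition.
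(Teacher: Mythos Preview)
Your proposal is correct and follows essentially the same argument as the paper: set $v_0=u$, $v_k=v$, apply Lemma~\ref{lem:color5} to color $S=(N^-(u)\cap N^+(v))\cup V(P)$ with $5t$ colors, and cover the remaining vertices by $N^+(u)\cup N^-(v)$ using two palettes of size $g(t)$. The paper's proof is terser but makes exactly the same choices and uses the same decomposition.
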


\begin{proof}
Since $T$ is strongly connected, we can set $v_0 = u$ and $v_k = v$,
find a shortest path from $v_0$ to $v_k$, and apply Lemma
\ref{lem:color5} to color the subtournament $T[S]$.  Any remaining
vertex belongs to at least one of the sets $N^+(v_0)$ and $N^-(v_k)$,
which can each be colored with $g(t)$ colors.
\end{proof}

If $T$ has small dominating and absorbing sets, then the following lemma provides a bound on $\chiv(T)$.

\begin{lemma}\label{lem:dom_bounded}
Let $T$ be a $t$-arc-bounded tournament.  Suppose $T$ has a dominating set
$\gammaout(T)$ and an absorbing set $\gammain(T)$.  Then $\chiv(T)
\leq 5 t \cdot |\gamma^-(T)| \cdot |\gamma^+(T)|$.
\end{lemma}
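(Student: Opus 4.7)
The plan is to decompose $V(T)$ into $|\gammaout(T)| \cdot |\gammain(T)|$ possibly overlapping sets indexed by pairs $(u,w) \in \gammaout(T) \times \gammain(T)$, show that each piece has chromatic number at most $5t$, and combine the pieces using pairwise disjoint color palettes to obtain the desired bound on $\chiv(T)$.

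Write $D = \gammaout(T)$ and $A = \gammain(T)$. For each pair $(u,w) \in D \times A$ with $u \neq w$, I would invoke strong connectivity of $T$ to take a shortest path $P_{u,w}$ from $w$ to $u$ and set
\[
S_{u,w} \;=\; V(P_{u,w}) \,\cup\, \bigl(N^+(u) \cap N^-(w)\bigr).
\]
Applying Lemma \ref{lem:color5} with $v_0 = w$ and $v_k = u$ yields $\chiv(T[S_{u,w}]) \leq 5t$; for the degenerate pairs with $u = w$, the set $S_{u,u} := \{u\}$ trivially satisfies the bound.

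The key covering claim is that every vertex of $T$ lies in some $S_{u,w}$. For $v \in V(T) \setminus (D \cup A)$, the dominating property gives some $u \in D$ with $u \to v$ and the absorbing property gives some $w \in A$ with $v \to w$, so $v \in N^+(u) \cap N^-(w) \subseteq S_{u,w}$. Vertices in $D \cup A$ are covered because $V(P_{u,w})$ contains the endpoints $u$ and $w$, and both $D$ and $A$ are non-empty. Once the covering is established I would assign each $T[S_{u,w}]$ its own palette of at most $5t$ fresh colors and, for every vertex $v$, pick the color assigned to $v$ in any single piece containing it. Since the palettes are pairwise disjoint, any color class of the resulting global coloring is contained in a color class of some $T[S_{u,w}]$ and hence induces an acyclic subdigraph of $T$. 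The total number of colors used is at most $5t \cdot |D| \cdot |A|$.

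The step that requires care is the orientation of the path in the definition of $S_{u,w}$: the dominating and absorbing properties naturally produce the set $N^+(u) \cap N^-(w)$, but Lemma \ref{lem:color5} gives a low-chromatic bound only for $(N^-(v_0) \cap N^+(v_k)) \cup V(P)$. Choosing $v_0 = w$ and $v_k = u$, i.e. orienting the path from the absorbing endpoint to the dominating endpoint, is exactly what aligns these two sets and allows the bound from Lemma \ref{lem:color5} to go through.
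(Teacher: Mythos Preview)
Your proposal is correct and follows essentially the same approach as the paper: cover $V(T)$ by the sets $S_{u,w} = V(P_{u,w}) \cup (N^-(w)\cap N^+(u))$ indexed by pairs from the absorbing and dominating sets, apply Lemma~\ref{lem:color5} to each piece, and use disjoint palettes. The paper's proof is terser (it does not separately discuss the degenerate case $u=w$ or the orientation subtlety you highlight), but the argument is the same.
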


\begin{proof}
We may assume that $T$ is strongly connected.  Let $q = |\gamma^-(T)|
\cdot |\gamma^+(T)|$.  Let ${\cal P} = \{P_1, P_2, \ldots, P_q\}$ be a
set of $|\gamma^-(T)| \cdot |\gamma^+(T)|$ shortest paths from each $u
\in \gammain(T)$ to each $w \in \gammaout(T)$.  Then for each $v \in
V$, there is some path $P_j \in {\cal P}$ from some $u$ to some $w$
such that $v \in (N^-(u) \cap N^+(w)) \cup V(P_j)$.  So we can apply
Lemma \ref{lem:color5}, which implies the lemma.
\end{proof}

To prove Theorem \ref{thm:edge_loc_to_glob}, we need one more lemma.
First, we give some notation and a definition.  For two disjoint
vertex sets, $X, Y \subset V$, we say $X \Rightarrow Y$ if all arcs
between $X$ and $Y$ go from $X$ to $Y$.  For a set $S \subset V$, we
define the set $\npm(S)$ to be all vertices $v$ in $V\setminus{S}$
such that there exist vertices $u,w \in S$ and arcs $uv$ and $vw$ in
$T$.

\begin{definition}
We define a \emph{jewel-chain} of length $p$ in a tournament to be an
ordered set $X=(X_i)_{1\leq i \leq p}$ such that each $X_i$ induces a
jewel, all $X_i$'s are disjoint, and $X_i \Rightarrow X_{i+1}$ for
all $i$ such that $1 \leq i \leq p-1$.
\end{definition}

For a jewel-chain $X$, we say that an arc $uv$ is {\em forward} if $u
\in X_i$ and $v \in X_j$ and $i < j$.  If $j < i$, then $uv$ is {\em
  backward}.  Jewel-chains in $t$-arc-bounded tournaments are useful because they contain no backward
arcs.

\begin{observation}\label{obs:jewel}
  Let $T$ be a $t$-arc-bounded tournament and let $X$ be a jewel-chain in $T$.  Then $X$ contains no backward arcs.
\end{observation}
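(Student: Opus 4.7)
The plan is to argue by contradiction using a minimal-gap backward arc. Suppose $X$ contains at least one backward arc, and pick $e = uv$ with $u \in X_a$, $v \in X_b$ and $b < a$, minimising the gap $a - b$. Because $X_i \Rightarrow X_{i+1}$ forbids backward arcs between consecutive blocks, we must have $a - b \geq 2$, so at least one intermediate block $X_{b+1}$ lies strictly between $X_b$ and $X_a$ in the chain.

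The main claim I would establish is that every intermediate jewel $X_c$ with $b < c < a$ is fully contained in the arc neighborhood $N(e) = N^+(v) \cap N^-(u)$. Take any $x \in X_c$; if $x \notin N(e)$, then either the arc $xv$ exists in $T$ (and then $xv$ is itself a backward arc, of smaller gap $c - b < a - b$) or the arc $ux$ exists (and then $ux$ is a backward arc of smaller gap $a - c < a - b$). Either alternative contradicts the minimality of $a - b$, so $X_c \subseteq N(e)$ as claimed.

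Once this containment is in place, $N(e)$ contains $X_{b+1}$, which by definition of a jewel-chain is a jewel, i.e.\ a vertex set on which $T$ has chromatic number at least $t+1$. Hence $\chiv(T[N(e)]) \geq t+1$, contradicting the assumption that $T$ is $t$-arc-bounded, and this contradiction forces $X$ to have no backward arc.

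The only delicate ingredient is choosing the right quantity to minimise over backward arcs so that \emph{both} branches of the case split genuinely produce a strictly smaller backward arc: with the gap $a-b$ this happens automatically, because falling out of $N^+(v)$ drops the head of the new arc to an index $c < a$ while falling out of $N^-(u)$ lifts the tail to an index $c > b$. With that framing chosen, the rest follows directly from the definitions of a jewel-chain and of $N(e)$, so I do not expect any further obstacle.
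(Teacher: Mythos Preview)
Your proposal is correct and follows essentially the same approach as the paper: choose a backward arc $e=uv$ minimising the gap, observe that consecutive blocks force the gap to be at least two, and then show that an intermediate jewel (the paper uses $X_{i+1}$, you prove it for every $X_c$ with $b<c<a$) lies inside $N(e)$, contradicting $t$-arc-boundedness. The only difference is that you spell out explicitly, via the minimality of the gap, why $X_{b+1}\subseteq N(e)$, whereas the paper simply asserts this containment.
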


\begin{proof}
Consider a backward arc $e = uv$, with $u \in X_j$ and $v \in X_i$ for
$j > i$ such that $j-i$ is minimized.  It must be the case that $j >
i+1$, since all arcs between $X_i$ and $X_{i+1}$ are forward by
definition.  Then $X_{i+1} \subseteq N(e)$, and since $X_{i+1}$ has
chromatic number at least $t+1$, we have $\chiv(T[N(e)]) \geq t+1$,
which contradicts $T$ being $t$-arc-bounded.  Thus, all arcs with
endpoints in distinct $X_i$'s must be forward.\end{proof}

\begin{lemma}\label{lem:main}
Let $T$ be a $t$-arc-bounded tournament that contains a jewel.  Then
there exists a function $g$ such that there are two vertices $u,v$
such that $\chiv(T[N^+(u)]) \leq g(t)$, and $\chiv(T[N^-(v)]) \leq
g(t)$.
\end{lemma}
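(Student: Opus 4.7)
The plan is to pull $u$ and $v$ out of the two ends of a maximal jewel-chain. Since $T$ is assumed to contain a jewel (a jewel-chain of length one), a maximal jewel-chain $X_1, \ldots, X_p$ exists. I will argue that any $v \in X_1$ and any $u \in X_p$ satisfy the conclusion. Reversing all arcs of $T$ preserves $t$-arc-boundedness, sends jewels to jewels, reverses the chain into another maximal jewel-chain, and swaps $N^+$ with $N^-$; thus by symmetry it is enough to exhibit a bound $\chiv(T[N^-(v)]) \leq g(t)$.

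For this, I would partition $V(T) \setminus X_1$ into $P = \{w : w \Rightarrow X_1\}$, $Q = \{w : X_1 \Rightarrow w\}$, and the set $R$ of vertices with both in- and out-neighbors in $X_1$. Observation~\ref{obs:jewel} places every $X_j$ with $j \geq 2$ inside $Q$, so $P$ and $R$ are both disjoint from the chain. Every $w \in Q$ satisfies $v \to w$, so $N^-(v) \cap Q = \emptyset$ and hence $N^-(v) \subseteq X_1 \cup P \cup R$. Since these three sets are disjoint, coloring each with a fresh palette will give
\[
\chiv(T[N^-(v)]) \;\leq\; \chiv(T[X_1]) + \chiv(T[P]) + \chiv(T[R]).
\]

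Each summand should be bounded by a function of $t$. Trivially $\chiv(T[X_1]) \leq \ell(t+1)$. For $R$, since $X_1$ is strongly connected, each $w \in R$ lies in $N(e)$ for some arc $e$ inside $T[X_1]$ obtained from an arc between $N^+(w) \cap X_1$ and $N^-(w) \cap X_1$; combined with $t$-arc-boundedness and the fact that $T[X_1]$ has at most $\binom{\ell(t+1)}{2}$ arcs, this yields $\chiv(T[R]) \leq t\binom{\ell(t+1)}{2}$. For $P$ the key observation is that $T[P]$ contains no jewel: such a jewel would be disjoint from the whole chain (since $P$ is) and would dominate $X_1$, so prepending it to the chain would produce a strictly longer jewel-chain, contradicting maximality. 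With no jewel present, Corollary~\ref{cor:uset} applied at parameter $t+1$ furnishes $T[P]$ with a dominating and an absorbing set each of size at most $K(t+1)$, and Lemma~\ref{lem:dom_bounded} (applicable since subtournaments inherit $t$-arc-boundedness) bounds $\chiv(T[P]) \leq 5t \cdot K(t+1)^2$.

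The main obstacle is showing that $T[P]$ is jewel-free, which requires a prospective jewel in $P$ to be disjoint from every $X_j$ in the chain, not just from $X_1$. Observation~\ref{obs:jewel} forces all later $X_j$ into $Q$ and hence out of $P$, and this is precisely what is needed. The remaining steps are direct invocations of Corollary~\ref{cor:uset} and Lemma~\ref{lem:dom_bounded}, and the three bounds combine additively to yield the function $g$.
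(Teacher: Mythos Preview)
Your proof is correct and follows essentially the same approach as the paper: take a jewel-chain of maximum length, show that the set $P$ complete to $X_1$ is jewel-free (hence has bounded chromatic number via Corollary~\ref{cor:uset} and Lemma~\ref{lem:dom_bounded}), and bound the mixed neighborhood $R = \npm(X_1)$ using arcs inside $X_1$. The only cosmetic difference is that the paper exploits a Hamilton cycle in the strongly connected $T[X_1]$ to get $\chiv(T[R]) \leq t \cdot \ell(t+1)$, whereas you cover $R$ with all $\binom{\ell(t+1)}{2}$ arcs; both yield a valid $g(t)$.
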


\begin{proof}
Let $X$ be a jewel-chain in $T$ of maximum length, say $p$.  Consider
$X_1$.  Let $Y$ be the set of vertices such that $Y \Rightarrow X_1$.
Then $Y$ does not contain a jewel (otherwise $X$ would not have
maximum length).  By Corollary \ref{cor:uset}, $Y$ must have a small
dominating set and a small absorbing set, each of size at most
$K(t+1)$.  So we can apply Lemma \ref{lem:dom_bounded} to bound the
chromatic number of $Y$ by $5t \cdot (K(t+1))^2$.  Moreover, the set
$\npm(X_1)$ has chromatic number at most $\ell(t+1) \cdot t$, since
$X_1$ contains a Hamilton cycle with at most $\ell(t+1)$ arcs and each
vertex in $\npm(X_1)$ belongs to $N(e)$ for some $e$ in the Hamilton
cycle.  Finally, a vertex $v$ in $X_1$ can have in-neighbors in $X_1$
itself, but this set has chromatic number at most $|X_1| \leq
\ell(t+1)$.

Set $g(t) = 2\ell(t+1) \cdot t + 5t \cdot (K(t+1))^2$.  Then each
vertex $u \in X_1$ has $\chiv(T[N^-(u)]) \leq g(t)$.  By the same
argument, each vertex $v \in X_p$ has $\chiv(T[N^+(v)]) \leq g(t)$.
This proves Lemma \ref{lem:main}.
\end{proof}

\subsection{Proof of Theorem \ref{thm:edge_loc_to_glob}}

We are now ready to prove Theorem \ref{thm:edge_loc_to_glob}.
\boundedMain*

\begin{proof}
Let $f(t) = 2\cdot g(t) + 5t$, where $g$ is defined as in the end of
the Proof of Lemma \ref{lem:main}.  If $T$ does not contain a jewel,
then by Corollary \ref{cor:uset}, it contains a dominating and an
absorbing set each of size at most $K(t+1)$.  In this case, we can
apply Lemma \ref{lem:dom_bounded} to show that $T$ can be colored with
at most $5t \cdot (K(t+1))^2$ colors.  If $T$ contains a jewel, then
we can apply Lemma \ref{lem:main} and Lemma \ref{lem:KN23} to prove
the theorem.
\end{proof}

\setcounter{claim}{0}

\section{Arc local-to-global for dense digraphs}\label{sec:bounded_alpha}

In this section, we extend Theorem \ref{thm:edge_loc_to_glob} from
tournaments to oriented graphs with bounded independence number.
Since our goal is to color a digraph $D$, we can assume that $D$ is
strongly connected; otherwise $D$ can be partitioned into strongly
connected parts, and each one can be colored separately.  An important
tool for the proof is Theorem \ref{thm:dom_alpha}, which extends
Theorem \ref{thm:dom_set} from tournaments to digraphs with bounded
independence number.\footnote{Theorem \ref{thm:hltw} was extended to
digraphs with bounded independence number by
\cite{harutyunyan2019coloring}, but they did not provide an extension
of Theorem \ref{thm:dom_set}.}  As the proof of Theorem
\ref{thm:dom_alpha} is very similar to the proof of Theorem
\ref{thm:dom_set} from \cite{harutyunyan2019locToGlobal}, it is
deferred to Appendix \ref{app:dom_lemma}.  A $(k,\ell)$-cluster in a
digraph $D$ is a set of vertices $S$ in $V(D)$ such that $\chiv(D[S])
\geq k$, $|S| \leq \ell$ and $D[S]$ is strongly connected.

\begin{restatable}{theorem}{domLemma}\label{thm:dom_alpha}
There exist functions $K$ and $\ell$ such that for every pair of
integers $k, \alpha \geq 1$, every digraph $D$ with independence
number $\alpha$ and dominating number at least $K(\alpha,k)$ contains
a $(k, \ell(\alpha,k))$-cluster.
\end{restatable}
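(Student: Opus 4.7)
The plan is to prove Theorem \ref{thm:dom_alpha} by induction on $k$, closely paralleling the tournament proof of Theorem \ref{thm:dom_set} in \cite{harutyunyan2019locToGlobal} while using the bound on the independence number as a substitute for the ``every pair is comparable'' property of tournaments. For the base case $k=1$, a single vertex forms a $(1,1)$-cluster, so one can set $K(\alpha,1)=\ell(\alpha,1)=1$. For the inductive step, assume functions $K(\alpha, k-1)$ and $\ell(\alpha, k-1)$ are given by the inductive hypothesis, and suppose $D$ has independence number $\alpha$ and domination number at least $K(\alpha, k)$, to be fixed sufficiently large at the end.

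The first stage is to peel off many disjoint $(k-1, \ell(\alpha, k-1))$-clusters together with ``anchor'' vertices that each receive an arc from their cluster. Starting from a vertex $v_1$ whose in-neighborhood $D[N^-(v_1)]$ has large enough domination number, the inductive hypothesis yields a cluster $S_1 \subseteq N^-(v_1)$; I remove $S_1 \cup \{v_1\}$ (together with whatever extra set is required to keep the residual digraph well-behaved) and iterate. By choosing $K(\alpha, k)$ to be a sufficiently iterated application of $K(\alpha, k-1)$, I guarantee enough rounds to obtain a long sequence of pairs $(v_i, S_i)$. Next, I apply a Ramsey-style pigeonhole to the arcs, reverse arcs, and non-edges between these pairs, passing to a subcollection with a uniform interaction pattern; here the bound on the independence number is crucial, because among any $\alpha+1$ anchor vertices there must be an arc, so the Ramsey cleanup eliminates the ``too many non-edges'' branch. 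Finally, the uniform subcollection is glued into a strongly connected subdigraph of chromatic number at least $k$: any proper $(k-1)$-coloring of the union must assign some $v_i$ the forbidden color of its cluster $S_i$, and combined with a backward/forward arc or a short connector path between the anchors this yields a contradiction; the size of the resulting cluster is bounded by a function of $\ell(\alpha, k-1)$ and the Ramsey number used.

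The main obstacle, as in the tournament proof, is the peeling stage: naively removing $S_i \cup \{v_i\}$ can destroy the domination number of what remains, because vertices dominated only by $S_i \cup \{v_i\}$ become ``exposed'', and in contrast to the tournament case this is aggravated by the existence of non-adjacent pairs, which prevent certain vertices from being absorbed by any candidate dominator. I would handle this by quantifying how much the domination number can drop per peeling round (controlled by $\alpha$ and $\ell(\alpha, k-1)$) and then setting $K(\alpha, k)$ large enough that the residual domination number remains above $K(\alpha, k-1)$ for the required number of iterations. Tracking these losses carefully is where the dependence of $K(\alpha, k)$ on $\alpha$ blows up, and I expect the resulting bound to be a tower-type function in both $k$ and $\alpha$.
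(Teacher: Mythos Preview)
Your sketch diverges from the paper's argument and, as written, has two real gaps.

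\textbf{Non-neighbors are not handled.} You induct only on $k$, but the key difficulty beyond tournaments is precisely that $V = N^+[v] \cup N^-(v) \cup N^o(v)$ with $N^o(v)$ possibly large. Your peeling step ``start from a vertex $v_1$ whose in-neighborhood has large enough domination number'' presupposes $\gamma(D[N^-(v_1)]) \geq \gamma(D) - O(1)$, which holds in tournaments but fails here: the slack $\gamma(D[N^o(v_1)])$ can absorb almost all of $\gamma(D)$. The paper deals with this via an \emph{outer} induction on $\alpha$: assuming the theorem for independence number $\alpha-1$, one gets Claim~\ref{clm:non_neighborhood}, namely $\gamma(D[N^o(v)]) \leq K(\alpha-1,k)$ whenever $D$ has no $(k,\ell(\alpha-1,k))$-cluster. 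This is what makes every subsequent domination-counting step go through. Your remark that losses are ``controlled by $\alpha$ and $\ell(\alpha,k-1)$'' does not provide a mechanism for this bound.

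\textbf{The gluing does not force a monochromatic cycle.} After your Ramsey cleanup on the anchors $v_1,\dots,v_m$, the surviving homogeneous relation is a transitive tournament (the non-edge branch is indeed killed by $\alpha$), hence acyclic. In any $(k-1)$-colouring you do get a monochromatic arc $w_i\to v_i$ with $w_i\in S_i$, but a single monochromatic arc, or even a monochromatic path through several anchors, is not a contradiction: you need a directed cycle, and nothing in your setup forces an arc back from some $v_j$ into some earlier $S_i$. The paper avoids Ramsey entirely. It fixes a minimum dominating set $B$, chooses a large $W\subseteq B$, finds \emph{one} $(k-1)$-cluster $A$ complete towards $W$, and then, for every large subset $S\subseteq W$, uses minimality of $B$ to find a $(k-1)$-cluster $A_S \subseteq N^+(S)$ that is complete towards $A$. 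In any $(k-1)$-colouring some colour class $S\subseteq W$ is large, and then vertices $u\in A$, $v\in S$, $w\in A_S$ of that colour form a monochromatic directed triangle $u\to v\to w\to u$. This triangle is built into the construction; your scheme would need a substantially more elaborate Ramsey step (on the full relation between each $v_j$ and each vertex of each $S_i$) to manufacture an analogous back-arc, and you have not indicated how.
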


\begin{corollary}\label{cor:uset_alpha}
There exist functions $K$ and $\ell$ such that for every pair of
integers $k, \alpha \geq 1$, every digraph $D$ with independence
number $\alpha$ contains either i) a dominating and an absorbing set,
each of size at most $K(\alpha,k)$, or ii) a
$(k,\ell(\alpha,k))$-cluster.
\end{corollary}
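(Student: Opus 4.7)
The plan is to follow the same strategy used in the proof of Corollary \ref{cor:uset} for tournaments, but substituting Theorem \ref{thm:dom_alpha} in place of Theorem \ref{thm:dom_set}. Fix integers $k, \alpha \geq 1$ and let $D$ be a digraph with independence number $\alpha$.

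First, I would apply Theorem \ref{thm:dom_alpha} to $D$. This produces either a dominating set of size at most $K(\alpha, k)$, or a vertex set $S$ of size at most $\ell(\alpha, k)$ with $\chiv(D[S]) \geq k$. In the latter case, if $D[S]$ is not already strongly connected, I would pass to the vertex set of a strongly connected component of $D[S]$ of maximum chromatic number. Since the chromatic number of a digraph equals the maximum chromatic number taken over its strongly connected components (an acyclic coloring of each component can be concatenated along a topological order on the components), this yields a set of size at most $\ell(\alpha, k)$ that is strongly connected and still has chromatic number at least $k$, i.e.\ a \klcluster.

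Second, to produce the absorbing set in the case where no cluster arises, I would run the same argument on the reverse digraph $D^R$ obtained by reversing every arc of $D$. Since $D$ is an oriented graph (no digons), reversing arcs leaves the set of non-adjacent pairs unchanged, so $D^R$ also has independence number $\alpha$. Moreover, a vertex set is acyclic in $D$ if and only if it is acyclic in $D^R$, so $\chiv$ is preserved both globally and on every induced subdigraph. Applying Theorem \ref{thm:dom_alpha} to $D^R$ thus yields either a dominating set of $D^R$ of size at most $K(\alpha, k)$, which is an absorbing set of $D$, or a \klcluster\ in $D^R$, which is also a \klcluster\ in $D$ by the same reversal invariance. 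Combining the two applications, we either obtain a \klcluster\ or we obtain both a dominating and an absorbing set, each of size at most $K(\alpha, k)$.

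I expect no real obstacle: the argument is a direct translation of the tournament proof of Corollary \ref{cor:uset}, and the only points to verify are the two invariance properties of the reversal operation (independence number and chromatic number), both of which are immediate in the oriented-graph setting. The substantive work has been absorbed into Theorem \ref{thm:dom_alpha}, which is proved separately in the appendix.
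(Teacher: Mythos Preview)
Your proof is correct and follows essentially the same approach as the paper: apply Theorem \ref{thm:dom_alpha} to $D$ and to its reversal, using that reversal preserves both independence number and chromatic number. The only minor difference is that your step of passing to a strongly connected component is redundant here, since Theorem \ref{thm:dom_alpha} (unlike Theorem \ref{thm:dom_set}) already outputs a $(k,\ell(\alpha,k))$-cluster, which is strongly connected by definition.
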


\begin{proof}
Let $k$ and $\alpha$ be constants, and $D$ a digraph with independence
number $\alpha$.  By Theorem \ref{thm:dom_alpha}, there exist
constants $K(\alpha,k)$ and $\ell(\alpha,k)$ such that one can find
either a dominating set of size at most $K(\alpha,k)$, or a subset of
size $\ell(\alpha,k)$ with chromatic number at least $k$.  Take the
digraph obtained by reversing all the arcs in $D$ and repeat the
previous argument. A dominating set in this digraph is an absorbing
set in $D$, while a subset with high chromatic number would also have
high chromatic number in $D$, as reversing all the arcs preserves the
chromatic number.
\end{proof}

In a digraph $D=(V,A)$, there are some pairs of vertices that do not
have arcs between them.  A pair $u,v$ is a {\em non-edge} in $D$ if
neither arc $uv$ nor arc $vu$ belongs to $A$.  The proof of the next
theorem involves adding arcs to a digraph $D$ to obtain a tournament.
Since there are two sets of arcs, $A$ and $B$, we use, for example,
$N_A^+(u)$ (rather than the more standard $N_D^+(u)$) and $N_B^+(u)$
to denote the set of vertices adjacent from $u$ via arcs in $A$ or
arcs in $B$, respectively.  We define $N_A^o(u)$ to be all vertices in
$V$ that form non-edges with $u$ in $D$.\footnote{We could have used $N_D^o(u)$ rather than $N_A^o(u)$ (since we never use $N_B^o(u)$), but we choose $N_A^o(u)$ for the sake of consistency.}  The goal in this section is
to prove our main theorem.

\boundedAlphaMain*

\begin{proof}
We prove this theorem by induction on $\alpha$.  For the base case,
Theorem \ref{thm:ours} proves the statement for $\alpha = 1$, by
setting $\h(t,1) = f(t)$.  For the induction hypothesis, we assume
that for any digraph $D=(V,A)$ with independence number $\alpha-1$, if
for all $e \in A$, $\chiv(D[N(e)]) \leq t$, then $\chiv(D) \leq
\h(t,\alpha-1)$.  Now our goal is to prove that for any digraph
$D=(V,A)$ with independence number $\alpha$, if for all $e \in A$,
$\chiv(D[N(e)]) \leq t$, then $\chiv(D) \leq \h(t,\alpha)$.

Consider a digraph $D=(V,A)$ with independence number $\alpha$.  We
construct a tournament $T = (V, A \cup B)$ where each arc in $B$ is a
non-edge in $D$.  Recall that we use $N_A^+(u)$ to denote the set of
vertices adjacent from $u$ via arcs from $A$.  Now we assign
directions as follows.  For each non-edge $u,v$ in $D$, if $N_A^+(v)
\cap N_A^-(u)$ and $N_A^+(u) \cap N_A^-(v)$ are both empty (i.e.,
contain no vertices) or are both non-empty, we direct the arc
arbitrarily. Otherwise either $N_A^+(u) \cap N_A^-(v) = \emptyset$,
and we direct the arc from $v$ to $u$, or $N_A^+(v) \cap N_A^-(u) =
\emptyset$ and we direct the arc from $u$ to $v$.  Thus, we have the
following property for each arc $uv$ in $B$: Either $N_A^+(v) \cap
N_A^-(u)$ contains no vertices or $N_A^+(v) \cap N_A^-(u)$ and
$N_A^+(u) \cap N_A^-(v)$ both contain at least one vertex.

Now our goal is to color the tournament $T$ such that each color class
induces an acyclic set of arcs from $A$. This will in turn bound the
chromatic number of $D$.  We use the notation $D[N_T(e)]$ to denote
the subgraph of $D$ (i.e., arcs from $A$) in the neighborhood of arc
$e$ in $T$.

\begin{claim}\label{clm:neighborhood}
$\forall e \in A \cup B, \chiv(D[N_T(e)]) \leq 3 \cdot \h(t,\alpha-1) +
  2t$. 
\end{claim}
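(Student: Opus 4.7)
The plan is to fix an arc $e = uv \in A \cup B$ and partition its neighborhood in $T$ into four pieces according to whether each of the two triangle arcs belongs to $A$ or to $B$. Specifically, set
\[
S_1 = N_A^+(v) \cap N_A^-(u), \quad S_2 = N_A^+(v) \cap N_B^-(u), \quad S_3 = N_B^+(v) \cap N_A^-(u), \quad S_4 = N_B^+(v) \cap N_B^-(u),
\]
so that $N_T(e) = S_1 \cup S_2 \cup S_3 \cup S_4$. The three ``non-edge'' pieces $S_2 \cup S_4$ and $S_3 \cup S_4$ will be handled uniformly by the induction hypothesis, while $S_1$ requires a case split on whether $e \in A$ or $e \in B$.

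For the non-edge pieces, observe that every vertex of $S_2 \cup S_4$ forms a non-edge with $u$ in $D$ (since its arc to $u$ lies in $B$). Hence any independent set of $D[S_2 \cup S_4]$ can be extended by $u$ to a strictly larger independent set of $D$, forcing the independence number of $D[S_2 \cup S_4]$ to be at most $\alpha - 1$. The arc-neighborhood hypothesis is inherited by induced subdigraphs, since neighborhoods in the subgraph are subsets of neighborhoods in $D$ and chromatic number is monotone, so the induction hypothesis yields $\chiv(D[S_2 \cup S_4]) \leq \h(t, \alpha-1)$. The symmetric argument with the roles of $u$ and $v$ swapped gives $\chiv(D[S_3 \cup S_4]) \leq \h(t, \alpha-1)$, and together these cover $S_2 \cup S_3 \cup S_4$ with at most $2\h(t, \alpha-1)$ colors.

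For $S_1$, if $e \in A$ then $S_1 = N_D(e)$, so the hypothesis $\chiv(D[N(e)]) \leq t$ finishes this case. The main obstacle is the subcase $e \in B$, where $S_1$ is not a priori an arc-neighborhood of $D$ and its vertices need not have any non-edge with $u$ or $v$. Here I would use the orientation rule of the tournament construction: when $e = uv \in B$ and $S_1 \neq \emptyset$, the set $N_A^+(u) \cap N_A^-(v)$ is also nonempty, so we may fix a witness $w' \in N_A^+(u) \cap N_A^-(v)$. Now partition $S_1$ according to how each $x$ relates to $w'$ in $D$. If $w'x \in A$, then $x \in N_A^+(w') \cap N_A^-(u) = N_D(uw')$, so this piece has $\chiv \leq t$. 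If $xw' \in A$, then $x \in N_A^+(v) \cap N_A^-(w') = N_D(w'v)$, so this piece also has $\chiv \leq t$. If $\{x, w'\}$ is a non-edge in $D$, then adjoining $w'$ extends any independent set of this piece, so its independence number is at most $\alpha - 1$ and induction gives $\chiv \leq \h(t, \alpha-1)$. Combining the three subcases, $\chiv(D[S_1]) \leq 2t + \h(t, \alpha-1)$.

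Summing the bounds on $S_1$ and $S_2 \cup S_3 \cup S_4$ gives $\chiv(D[N_T(e)]) \leq 2t + 3\h(t, \alpha-1)$, as required. The heart of the argument, and the only step that really uses the particular orientation rule on $B$, is the $e \in B$ subcase of $S_1$: it is precisely the existence of the witness $w'$ that lets one cover $S_1$ by two arc-neighborhoods of $D$ plus one piece of strictly smaller independence number, and this is exactly what the orientation of $B$ was engineered to guarantee.
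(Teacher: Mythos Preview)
Your proof is correct and follows essentially the same approach as the paper. The paper uses a three-piece decomposition $S_1 = N_A^-(u)\cap N_A^+(v)$, $S_2 = N_B^-(u)$, $S_3 = N_B^+(v)$ (which overlap in your $S_4$) rather than your four-piece partition, but the substance is identical: bound the non-edge pieces by induction on $\alpha$, bound $S_1$ directly when $e\in A$, and when $e\in B$ invoke the orientation rule to produce the witness $w'$ and split $S_1$ into $N_A(uw')$, $N_A(w'v)$, and $N_A^o(w')$.
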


\begin{cproof}
  Consider an arc $e=uv \in A$.  We partition $N_T(e)$ into three
  subsets of vertices.

  \begin{enumerate}
\item[(i)] $S_1=N_A^-(u) \cap N_A^+(v)$.  By the condition of the theorem,
$\chiv(D[S_1])=\chiv(D[N_A(e)]) \leq t$.
		
\item[(ii)] $S_2 = N_B^-(u)$. Then $D[S_2]$ has independence number at most
$\alpha -1$.  Thus, by the induction hypothesis, $\chiv(D[S_2]) \leq
\h(t,\alpha-1)$.
		
\item[(iii)] $S_3 = N_B^+(v)$. Then $D[S_3]$ has independence number at most
$\alpha -1$. Thus, by the induction hypothesis $\chiv(D[S_3]) \leq
\h(t,\alpha-1)$.

\end{enumerate}

Therefore, for an arc $e \in A$, we have $\chiv(D[N_T(e)]) \leq 2
\cdot \h(t, \alpha-1) + t$.  Next, we consider an arc $e = uv \in B$.
We partition $N_T(e)$ into three subsets of vertices.

  \begin{enumerate}
\item[(i)] $S_1=N_A^-(u) \cap N_A^+(v)$.  Then either $S_1$ is empty,
  in which case $\chiv(D[S_1])=0$, or $S_1$ is non-empty.  In this
  case, take any vertex $w \in N_A^+(u) \cap N_A^-(v)$. Notice that
  $S_1 \subseteq N_A(uw) \cup N_A(wv) \cup N_A^o(w)$.  By the
  condition of the theorem, $\chiv(D[N_A(uw)]) \leq t$ and
  $\chiv(D[N_A(wv)]) \leq t$.  Finally, $N_A^o(w)$ has independence
  number at most $\alpha-1$.  Thus by the induction hypothesis,
  $\chiv(D[N_A^o(w)]) \leq \h(t,\alpha-1)$. Therefore, $\chiv(D[S_1])
  \leq 2t + \h(t,\alpha-1)$.

\item[(ii)] $S_2 = N_B^-(u)$.  Then $D[S_2]$ has independence number at most
$\alpha -1$.  Thus, by the induction hypothesis $\chiv(D[S_2]) \leq
\h(t,\alpha-1)$.
 
\item[(iii)] $S_3 = N_B^+(v)$.  Then $D[S_3]$ has independence number at most
$\alpha -1$.  Thus, by the induction hypothesis $\chiv(D[S_3]) \leq
\h(t,\alpha-1)$.

\end{enumerate}

Therefore, $\chiv(D[N_T(e)]) \leq 3 \cdot \h(t,\alpha-1) + 2t$.
\end{cproof}

\begin{claim}\label{clm:paths}
For any pair of vertices $u,v$ in $V$, $\chiv(D[N_T^-(u)\cap
  N_T^+(v)]) \leq 15 \cdot \h(t,\alpha-1) + 10t$.
\end{claim}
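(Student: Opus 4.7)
The plan is to adapt the proof of Lemma \ref{lem:color5} to this mixed setting, where paths and arc neighborhoods live in $T$ but the chromatic number we want to bound is of $D$. Since $D \subseteq T$ and $D$ is strongly connected, so is $T$, and assuming $u \neq v$ (otherwise the set is empty), I would take a shortest $u$-to-$v$ path $P = (u_0, u_1, \ldots, u_k)$ in $T$ with arcs $e_i = u_{i-1}u_i$. The first property of Lemma \ref{lem:dom-by-path} applied to $T$ then shows every vertex of $N_T^-(u) \cap N_T^+(v)$ belongs to some $N_T(e_i)$, and I would assign each such vertex to one such index $i$.

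By Claim \ref{clm:neighborhood}, each $D[N_T(e_i)]$ admits an acyclic $D$-coloring with at most $3\h(t,\alpha-1) + 2t$ colors. I would combine these via five disjoint palettes of this size, coloring vertices assigned to $N_T(e_i)$ using palette $i \bmod 5$, for a total of $5 \cdot (3\h(t,\alpha-1) + 2t) = 15\h(t,\alpha-1) + 10t$ colors, matching the claimed bound.

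The step I expect to require care is verifying that each color class is acyclic in $D$. If some color class contains a $D$-cycle $C$, the palette mechanism forces any two layers $i \neq i'$ represented in $C$ to satisfy $|i - i'| \geq 5$. But a single $D$-arc $ww'$ from $N_T(e_i)$ to $N_T(e_{i'})$ with $i < i'$ and $i' - i \geq 5$, combined with the $T$-arcs $u_i w$ and $w' u_{i'-1}$ guaranteed by the definitions of the arc neighborhoods, yields a $u$-to-$v$ walk in $T$ of length $k - (i' - i) + 3 \leq k - 2$, contradicting the minimality of $P$ (here I use $A \subseteq A(T)$). So every inter-layer arc of $C$ runs from a strictly higher to a strictly lower index, which cannot close a directed cycle; hence $C$ must be confined to a single $N_T(e_i)$, contradicting its acyclic palette coloring. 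The small-$k$ cases ($k \in \{1,2\}$) need no separate treatment since the stated bound already dominates the palette counts used for them.
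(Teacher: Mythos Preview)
Your proof is correct and follows essentially the same approach as the paper's: take a shortest $u$-to-$v$ path in $T$, cover $N_T^-(u)\cap N_T^+(v)$ by the arc neighborhoods $N_T(e_i)$, observe that no $D$-arc can jump forward by five or more layers (else the path was not shortest), and color with five palettes of size $3\h(t,\alpha-1)+2t$ each via Claim~\ref{clm:neighborhood}. One tiny slip: the shortcut walk $u_0,\dots,u_i,w,w',u_{i'-1},\dots,u_k$ has length $k-(i'-i)+4$, not $+3$, so the bound is $\leq k-1$ rather than $\leq k-2$; this still contradicts minimality of $P$, so the argument stands.
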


\begin{cproof}
Recall that $D$, and therefore $T$, is strongly connected.  For any
pair of vertices $u,v$, take the shortest path $(e_i)_{1\leq i \leq
  k}$ from $u$ to $v$ in $T$.  Any vertex in $N_T^-(u)\cap N_T^+(v)$
must be in the neighborhood $N_T(e_i)$ of some arc $e_i$ of the
shortest path.  An arc from a vertex in $N_T(e_i)$ to a vertex in
$N_T(e_j)$ is {\em forward} if $i < j$ and {\em backward} if $j < i$.
There can be no arc in $A$ from a vertex in $N_T(e_i)$ to a vertex in
$N_T(e_j)$ for $j \geq i + 5$, or else there would be a shorter path
from $u$ to $v$. Thus, we can use five color palettes of $3 \cdot
\h(t,\alpha-1) + 2t$ colors each, and color $N_T(e_i)$ with the color
palette $i \bmod 5$.  By Claim \ref{clm:neighborhood}, each
neighborhood $N_T(e_i)$ does not contain a monochromatic directed
cycle of arcs from $A$.  Because all forward arcs from $A$ between
different neighborhoods are bicolored, this results in a coloring with
no monochromatic directed cycle of arcs from $A$.  In total, this
coloring uses $15 \cdot \h(t,\alpha-1) + 10t$ colors.
	\end{cproof}

If we can find a pair of vertices $u,v$ such that $\chiv(D[N_T^+(u)
  \cup N_T^-(v)])$ is small (i.e., bounded by a function of $t$ and
$\alpha$), then we can use Claim \ref{clm:paths} to bound $\chiv(D)$
and prove the theorem.  To do this, we need a few more tools.

\begin{claim}\label{clm:dom_bounded}
If the tournament $T = (V, A\cup B)$ has a dominating set
$\gammaout(T)$ and an absorbing set $\gammain(T)$, then $\chiv(D) \leq
|\gammaout(T)| \cdot |\gammain(T)| \cdot (15 \cdot \h(t,\alpha-1) +
10t + 2)$.
\end{claim}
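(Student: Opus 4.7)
The plan is to mirror Lemma~\ref{lem:dom_bounded}, replacing its appeal to Lemma~\ref{lem:color5} by Claim~\ref{clm:paths}. Since $T$ is strongly connected, for every pair $(u,w) \in \gammain(T) \times \gammaout(T)$ I would pick a shortest $u$-to-$w$ path $P_{u,w}$ in $T$ with arcs $e_1,\ldots,e_k$ and set $S_{u,w} = (N_T^-(u) \cap N_T^+(w)) \cup V(P_{u,w})$. Exactly as in the proof of Lemma~\ref{lem:dom_bounded}, the dominating/absorbing properties yield $V = \bigcup_{(u,w)} S_{u,w}$: a vertex outside $\gammain(T) \cup \gammaout(T)$ is absorbed by some $u \in \gammain(T)$ and dominated by some $w \in \gammaout(T)$, placing it in $N_T^-(u) \cap N_T^+(w)$, while the vertices of $\gammain(T) \cup \gammaout(T)$ themselves appear as endpoints of the various $P_{u,w}$.

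Next, I would give each pair $(u,w)$ its own disjoint palette of $15\cdot\h(t,\alpha-1) + 10t + 2$ colors and color $D[S_{u,w}]$ acyclically using that palette. Palette-disjointness guarantees that no monochromatic directed cycle of $D$ can span two distinct $S_{u,w}$'s, so it suffices to color each $D[S_{u,w}]$ acyclically on its own palette. Since $T$ is a tournament, Lemma~\ref{lem:dom-by-path} applies to $P_{u,w}$: when $k \ge 3$, every vertex of both $V(P_{u,w})$ and $N_T^-(u) \cap N_T^+(w)$ lies in some $N_T(e_i)$, so I would repeat the argument of Claim~\ref{clm:paths}, splitting the palette into five sub-palettes of $3\cdot\h(t,\alpha-1) + 2t$ colors and coloring $N_T(e_i)$ with sub-palette $i \bmod 5$ via Claim~\ref{clm:neighborhood}. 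No arc of $A$ can jump from $N_T(e_i)$ to $N_T(e_j)$ with $j \ge i+5$, since this would give a shorter $u$-$w$ path in $T$.

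The short-path cases $k \in \{1,2\}$ leave at most two vertices of $S_{u,w}$ outside $\bigcup_i N_T(e_i)$ by Lemma~\ref{lem:dom-by-path}(3) (the endpoints $u,w$ when $k=1$, or the middle vertex $v_1$ when $k=2$), and these are assigned the two remaining colors of the palette as singleton color classes. Summing palette sizes over the $|\gammain(T)| \cdot |\gammaout(T)|$ pairs gives the claimed bound.

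The one step that requires the most care is verifying acyclicity across the whole coloring: any monochromatic directed cycle in $D$ must lie entirely inside a single $S_{u,w}$ by palette-disjointness, and within $S_{u,w}$ the five-sub-palette scheme forces any such cycle either to use at least two colors (if it traverses an arc of $A$ between distinct $N_T(e_i)$'s) or to be confined to a single $N_T(e_i)$, where Claim~\ref{clm:neighborhood} rules it out. Once this is set up, everything else follows the blueprint of Lemma~\ref{lem:dom_bounded} almost verbatim.
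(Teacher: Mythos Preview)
Your proof is correct and follows essentially the same approach as the paper: cover $V$ by sets indexed by pairs $(u,w)\in\gammain(T)\times\gammaout(T)$, color each via the method behind Claim~\ref{clm:paths} on disjoint palettes, and spend a couple of extra colors on leftover vertices. The only cosmetic difference is that the paper invokes Claim~\ref{clm:paths} as a black box on $N_T^-(u)\cap N_T^+(w)$ and then gives each vertex of $\gammain(T)\cup\gammaout(T)$ its own individual color (bounding $|\gammain(T)|+|\gammaout(T)|\le 2\,|\gammain(T)|\cdot|\gammaout(T)|$ to get the ``$+2$''), whereas you absorb those vertices into $S_{u,w}$ via $V(P_{u,w})$ and reserve two spare colors per palette for the short-path exceptions of Lemma~\ref{lem:dom-by-path}.
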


\begin{cproof}
We now define a coloring $C$ of $D$. For each pair of vertices $u \in
\gammain(T), v \in \gammaout(T)$, we can color the set $N_T^-(u) \cap
N_T^+(v)$ using a different palette of $15 \cdot \h(t,\alpha-1) + 10t$
colors by Claim \ref{clm:paths}.  Each vertex $w$ of
$V\setminus{(\gammain(T) \cup \gammaout(T))}$ can be colored this way;
indeed for each such vertex $w$, there is some pair of vertices $u \in
\gammain(T), v \in \gammaout(T)$ such that $w \in N_T^-(u) \cap
N_T^+(v)$.  Moreover, each vertex in $\gammaout(T) \cup \gammain(T)$
can be colored with its own color.  If a vertex is assigned more than
one color, simply use the first color it is given.  This coloring uses
a total of at most $|\gammaout(T)| \cdot |\gammain(T)| \cdot (15 \cdot
\h(t,\alpha-1) + 10t) + |\gammaout(T)| + |\gammain(T)| \leq
|\gammaout(T)| \cdot |\gammain(T)| \cdot (15 \cdot \h(t,\alpha-1) +
10t + 2)$ colors.
\end{cproof}

Set $d = 3 \cdot \h(t, \alpha-1) + 2t$.  Notice that $T=(V,A \cup B)$
is not necessarily $d$-arc-bounded, since $\chiv(T[N_T(e)) \geq
  \chiv(D[N_T(e)])$.  We now modify the definition of a jewel (defined
  in the previous section) for our current setting: A {\em jewel} is a
  subset $J \subset V$ such that $J$ is a $(d+1, \ell(\alpha,
  d+1))$-cluster in $D$, so $\chiv(D[J]) \geq d+1$ and $|J| \leq \ell(\alpha, d+1)$.

\begin{definition}
We define a \emph{jewel-chain} in $T$ of length $p$ to be an ordered
set $X=(X_i)_{1\leq i \leq p}$ such that each $X_i$ induces a jewel
in $D$ (i.e., $D[X_i]$ is a jewel), all $X_i$'s are disjoint, and $X_i
\Rightarrow X_{i+1}$ for all $1\leq i \leq p-1$
(i.e., $X_i$ is complete to $X_{i+1}$ in $T$).
\end{definition}

As in the previous section, we say that for a jewel chain $X$, an arc
$uv$ is {\em forward} if $u \in X_i$ and $v \in X_j$ and $i < j$.  If
$j < i$, then arc $uv$ is {\em backward}.  The next claim is similar,
but not identical, to Observation \ref{obs:jewel}.  The subtle
difference stems from the fact that we care about the chromatic number
of jewel with respect to $D$ rather than $T$.

\begin{claim}\label{clm:backarc}
	A jewel-chain $X$ contains no backward arcs in $T$.
\end{claim}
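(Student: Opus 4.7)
The approach will be to mirror the proof of Observation \ref{obs:jewel} almost verbatim, with two bookkeeping changes dictated by the new setting. The role played there by the hypothesis ``$\chiv(T[N(e)]) \le t$'' is now played by Claim \ref{clm:neighborhood}, which bounds $\chiv(D[N_T(e)]) \le d$ for every $e \in A \cup B$, where $d := 3\h(t,\alpha-1) + 2t$. Correspondingly, the role played by a $(t+1, \ell(t+1))$-cluster is now played by a jewel as redefined in this section, i.e., a $(d+1, \ell(\alpha, d+1))$-cluster measured with respect to $D$.

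Assume for contradiction that some backward arc exists in the jewel-chain $X = (X_i)_{1 \le i \le p}$, and pick such an arc $e = uv \in A \cup B$, with $u \in X_j$, $v \in X_i$, $j > i$, minimizing $j-i$. Since $X_i \Rightarrow X_{i+1}$ in $T$ by the definition of jewel-chain, no arc ever goes from $X_{i+1}$ back to $X_i$, so the minimum index gap must satisfy $j \ge i+2$.

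The main step will be to show $X_{i+1} \subseteq N_T(e) = N_T^+(v) \cap N_T^-(u)$. Fix any $w \in X_{i+1}$. First, since $v \in X_i$ and $X_i \Rightarrow X_{i+1}$ in $T$, the arc $vw$ lies in $A \cup B$, placing $w \in N_T^+(v)$. Second, because $T$ is a tournament, exactly one of $uw$ or $wu$ is an arc of $T$; if it were $uw$, then $uw$ would be a backward arc with index gap $j - (i+1) < j - i$, contradicting the minimality of $e$. Hence $wu \in A \cup B$ and $w \in N_T^-(u)$.

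Combining these two inclusions gives $X_{i+1} \subseteq N_T(e)$, so $\chiv(D[N_T(e)]) \ge \chiv(D[X_{i+1}]) \ge d+1$, contradicting Claim \ref{clm:neighborhood}. The only genuine obstacle is making sure both sides of the final contradiction speak about the same chromatic number, i.e., $\chiv$ of a subgraph of $D$ rather than of $T$; the definition of a jewel in this section is chosen precisely so that $\chiv(D[X_{i+1}]) \ge d+1$ lines up exactly with the bound from Claim \ref{clm:neighborhood}, which is what prevents the proof from being a literal copy of Observation \ref{obs:jewel}.
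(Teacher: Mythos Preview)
Your proof is correct and follows essentially the same approach as the paper's own proof: choose a backward arc $e=uv$ minimizing the index gap, observe that the gap is at least two, deduce $X_{i+1}\subseteq N_T(e)$, and contradict Claim~\ref{clm:neighborhood} via $\chiv(D[X_{i+1}])\ge d+1$. Your write-up is in fact more explicit than the paper's, which simply asserts ``Then $X_{i+1}\subseteq N_T(e)$'' without spelling out that the inclusion into $N_T^-(u)$ relies on the minimality of $j-i$; you correctly supply that step.
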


\begin{cproof}
Consider a backward arc $e = uv$, with $u \in X_j$ and $v \in X_i$ for
$j > i$ such that $j-i$ is minimized.  It must be the case that $j >
i+1$, since all arcs between $X_i$ and $X_{i+1}$ are forward by
definition.  Then $X_{i+1} \subseteq N_T(e)$, and since
$\chiv(D[N_T(e)]) \geq d+1$, this contradicts Claim
\ref{clm:neighborhood}.  Thus, all arcs with endpoints in distinct
$X_i$'s must be forward.\end{cproof}

Let $X$ be a jewel-chain in $T$ of maximum length, say $p$.  Define
$Y$ to be the vertex set such that $Y \Rightarrow X_1$ in $T$.  Then
$D[Y]$ does not contain a jewel by assumption (otherwise, we could
make the jewel-chain longer).  By Corollary \ref{cor:uset_alpha},
since $D[Y]$ does not contain a $(d+1, \ell(\alpha, d+1))$-cluster,
$D[Y]$ contains a dominating set and an absorbing set, each of size at
most $K(d+1,\alpha)$.  Notice that a dominating (absorbing) set in
$D[Y]$ is also a dominating (absorbing) set in $T[Y]$.  So we can
apply Claim \ref{clm:dom_bounded} to bound the chromatic number of
$D[Y]$ by $(K(d+1,\alpha))^2 \cdot (15 \cdot \h(t,\alpha-1) + 10t +
2)$.

Moreover, the set $\npm_T(X_1)$ has chromatic number at most
$\chiv(D[\npm_T(X_1)]) \leq \ell(d+1,\alpha) \cdot d$.  Finally, $v
\in X_1$ can have in-neighbors in $X_1$ itself, but these can have
chromatic number at most $|X_1| \leq \ell(d+1,\alpha)$.

So for each vertex $v \in X_1$, we have
	$$\chiv(D[N_T^-(v)]) \leq (K(d+1,\alpha))^2 \cdot (15 \cdot \h(t,\alpha-1) + 10t + 2) + \ell(d+1,\alpha) \cdot (d+1).$$
By the same argument, each vertex $u \in X_p$ has the same bound on $\chiv(D[N_T^+(u)))$.
So we have
$$\chiv(D[N_T^+(u) \cup N_T^-(v)]) \leq 2((K(d+1,\alpha))^2 \cdot (15
\cdot \h(t,\alpha-1) + 10t + 2) + \ell(d+1,\alpha) \cdot (d+1)).$$
By Claim \ref{clm:paths}, we have
$$\chiv[D] \leq 2((1 + (K(d+1,\alpha))^2) \cdot (15 \cdot \h(t,\alpha-1) + 10t + 2) + \ell(d+1,\alpha) \cdot (d+1)).$$
Since $d = 3 \cdot \h(t, \alpha-1) + 2t$, we can define the function $\h$ as follows.

\begin{align*}
  \h(t,\alpha)  = ~ & 2((1+(K(3 \cdot \h(t, \alpha-1) + 2t+1,\alpha))^2) \cdot (15 \cdot \h(t,\alpha-1) + 10t + 2) \\ ~~& + \ell(3 \cdot \h(t, \alpha-1) + 2t+1,\alpha) \cdot (3 \cdot \h(t, \alpha-1) + 2t+1).
  \end{align*}
So we have $\chiv[D] \leq \h(t,\alpha)$, concluding the proof of the theorem.
\end{proof}

\setcounter{claim}{0}

\section{Equivalence of Conjectures \ref{conj:ee} and \ref{conj:big_to_big}}\label{sec:equiv}

\cite{nguyen2023some} show that Conjecture \ref{conj:big_to_big}
implies Conjecture \ref{conj:ee}.  In this section, we prove that
Conjecture \ref{conj:ee} implies Conjecture \ref{conj:big_to_big},
showing they are equivalent.  Our main tool is Theorem
\ref{thm:boundedAlpha}.

Let $\s$ be a function such that $\s(x) \geq x^2 \cdot \s(x-1)+x$ and
let $T$ be a tournament.  Recall that a \tfcluster is a subset $S$ of
$V$ of size $\s(t)$ such that $\chiv(T[S]) \geq t$.  For brevity, we
use $t$-cluster to denote a \tfcluster in this section.

\begin{definition}
Define a $t$-heavy arc $e \in A(T)$ to be an arc such that $T[N(e)]$
contains a $(t-1)$-cluster, and a $t$-light arc to be an arc that is
not $t$-heavy.
\end{definition}

Let us prove a lemma that will allow us to restate Conjecture
\ref{conj:big_to_big}.  The proof is reminiscent of the proof of 3.7
in \cite{berger2013tournaments} and essentially the same as the proof
of Lemma 3.4 in \cite{aboulker2022heroes}.  Let $\clust$ be a function
such that $\clust(x) = x \cdot 2^{\s(2x)}+\s(2x)+1$.

\begin{lemma}\label{lem:cluster_big}
For all $c \geq 0$, in any tournament $T$ with $\chiv(T) \geq
\clust(c)$ that has a $2c$-cluster, there are two sets $A, B \subseteq
V(T)$ such that $\chiv(T[A]), \chiv(T[B]) \geq c$ and all arcs between
$A$ and $B$ go from vertices of $A$ to vertices of $B$.
\end{lemma}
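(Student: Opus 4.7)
The plan is to combine the $2c$-cluster $S$ (of size $s(2c)$) with a pigeonhole argument on the adjacency pattern of vertices outside $S$ to $S$, exactly in the style of Lemma 3.4 in \cite{aboulker2022heroes}. I would not use the heavy/light arc machinery or Theorem \ref{thm:boundedAlpha} here; those are reserved for subsequent applications. The lemma itself has a short, direct proof.

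Let $S$ be a $2c$-cluster, so $\chiv(T[S]) \geq 2c$ and $|S| \leq s(2c)$. First I would show that $\chiv(T[V \setminus S]) \geq \chiv(T) - |S| \geq c \cdot 2^{s(2c)} + 1$ using the trivial bound $\chiv(T) \leq \chiv(T[V\setminus S]) + |S|$ (color $S$ with fresh colors). Next, for each vertex $v \in V \setminus S$, I associate its \emph{type}, the function $S \to \{+,-\}$ that records, for each $u \in S$, whether $uv$ or $vu$ is the arc in $T$. Since $|S| \leq s(2c)$, there are at most $2^{s(2c)}$ possible types, which partition $V \setminus S$ into type classes.

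By the bound $\chiv(G) \leq \sum_i \chiv(G[V_i])$ for any partition, some type class $V_\tau$ satisfies
\[
\chiv(T[V_\tau]) \geq \left\lceil \frac{c \cdot 2^{s(2c)} + 1}{2^{s(2c)}} \right\rceil \geq c+1.
\]
The defining type $\tau$ partitions $S$ into $S^+$ (those $u \in S$ with $u \to V_\tau$) and $S^-$ (those $u \in S$ with $V_\tau \to u$), so $S^+ \Rightarrow V_\tau$ and $V_\tau \Rightarrow S^-$ in $T$. Since $\chiv(T[S]) \geq 2c$ and $\chiv$ is subadditive under partitions, at least one of $\chiv(T[S^+]) \geq c$ or $\chiv(T[S^-]) \geq c$ holds.

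In the first case, take $A = S^+$ and $B = V_\tau$; in the second, take $A = V_\tau$ and $B = S^-$. In either case $\chiv(T[A]), \chiv(T[B]) \geq c$ and all arcs between $A$ and $B$ run from $A$ to $B$, which is exactly the conclusion. There is no serious obstacle: the only things to verify carefully are the ``loss $|S|$'' step when removing $S$ and the pigeonhole rounding on chromatic number, both of which are routine.
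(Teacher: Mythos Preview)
Your proposal is correct and follows essentially the same argument as the paper: partition $V\setminus S$ by adjacency type to the $2c$-cluster $S$, use subadditivity of $\chiv$ to find a type class $V_\tau$ with $\chiv(T[V_\tau])\geq c$, and then split $S$ into $S^+\Rightarrow V_\tau$ and $V_\tau\Rightarrow S^-$, one of which has $\chiv\geq c$. The only cosmetic difference is that you first strip off $S$ (losing at most $|S|$ colors) and then pigeonhole on $V\setminus S$, whereas the paper combines these into a single contrapositive (``if every $S_i$ had $\chiv\leq c$ then $\chiv(T)\leq c\cdot 2^{s(2c)}+s(2c)$''); the arithmetic and the conclusion are identical.
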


\begin{proof}
Let $C \subset V(T)$ be a $2c$-cluster.  By the definition of a
cluster, $|C| \leq \s(2c)$.  So there are at most $2^{\s(2c)}$ ways of
partitioning $C$. Consider any vertex $v \in V(T) \setminus{C}$.  Then
$(N^+(v) \cap C, N^-(v) \cap C)$ forms a partition of $C$.  Thus, we
can partition $V(T) \setminus{C}$ into at most $2^{\s(2c)}$ subsets
$(S_i)_{1\leq i \leq 2^{\s(2c)}}$ such that all vertices in a subset
$S_i$ have the same in-neighborhood and out-neighborhood in $C$ (i.e.,
each vertex in $S_i$ partitions $C$ in the same way).  If every $S_i$
can be colored with at most $c$ colors, $T$ can be colored with at
most $c \cdot 2^{\s(2c)}+\s(2c)$ colors.  Therefore, since $\chiv(T)
\geq \clust(c) = c \cdot 2^{\s(2c)}+\s(2c)+1$ by the condition of the
lemma, there must exist some subset $S_i$ with $\chiv(T[S_i]) \geq c$.
Consider the partition $(N^+(v) \cap C, N^-(v) \cap C)$ of $C$ for a
vertex $v \in S_i$.  Either $\chiv(T[N^+(v) \cap C]) \geq c$ or
$\chiv(T[N^-(v) \cap C]) \geq c$, since $\chi(C) \geq 2c$. By
definition, $S_i$ is complete to $N^+(v) \cap C$ and complete from
$N^-(v) \cap C$.  Thus by setting $A = N^-(v) \cap C$ and $B = S_i$ if
$\chiv(T[N^-(v) \cap C]) \geq c$, and $A = S_i$, $B = N^+(v) \cap C$
if $\chiv(T[N^+(v) \cap C]) \geq c$, we have found $A$ and $B$ with
$A$ complete to $B$ and $\chiv(T[A]), \chiv(T[B]) \geq c$.
\end{proof}

Let us restate Conjectures \ref{conj:ee} and \ref{conj:big_to_big}.

\begin{conjecture}[Restatement of Conjecture \ref{conj:ee}]\label{conj:ee2}
There exists a function $\ee$ such that for every pair of integers
$t,c \geq 1$, if a graph $G$ satisfies $\chi(G) \geq \ee(t,c)$ and
$\omega(G) < t$, then there are subsets $A,B \subseteq V(G)$ with
$\chi(G[A]), \chi(G[B]) \geq c$, such that there are no edges between
$A$ and $B$.
  \end{conjecture}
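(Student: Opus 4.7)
The statement labeled Conjecture \ref{conj:ee2} is a cosmetic restatement of Conjecture \ref{conj:ee}: the original phrasing promises, for each pair $(t,c)$, the \emph{existence} of an integer $d$ with the stated property, and the restatement simply packages that collection of integers into a single function $\ee(t,c)$. The equivalence is immediate in both directions: given Conjecture \ref{conj:ee}, define $\ee(t,c)$ to be the least valid $d$; conversely, given the named function, take $d := \ee(t,c)$. So there is nothing to argue beyond this, and the restatement is offered purely to make the function $\ee$ available as a symbol in what follows.

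The substantive task behind the restatement is the derivation, still to come in the section, of Conjecture \ref{conj:big_to_big} from Conjecture \ref{conj:ee2}. My plan would be to show that every tournament $T$ with $\chiv(T)$ sufficiently large contains a $2c$-cluster, at which point Lemma \ref{lem:cluster_big} immediately produces the ordered pair $A \Rightarrow B$ demanded by Conjecture \ref{conj:big_to_big}. To find such a cluster, I would proceed by induction on $t$: assuming that every tournament of sufficiently large chromatic number contains a $(t-1)$-cluster, I would build the auxiliary graph $G_T$ on $V(T)$ whose edges are precisely the pairs of vertices joined by a $t$-heavy arc. A clique in $G_T$ collects vertices whose pairwise arcs each carry a $(t-1)$-cluster in their neighborhood, and the recursion $\s(t) \geq t^2 \s(t-1) + t$ is calibrated exactly so that a Ramsey-style merging condenses a sufficiently large such clique into a single $t$-cluster. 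So either $\omega(G_T)$ is already large and we are done, or $\omega(G_T)$ is bounded in terms of $t$ and we may invoke Conjecture \ref{conj:ee2} on $G_T$.

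For the invocation of \ref{conj:ee2} to produce useful output, $\chi(G_T)$ must itself be large, and here Theorem \ref{thm:boundedAlpha} enters. The subdigraph of $T$ obtained by keeping only $t$-light arcs has, by induction, every arc-neighborhood of bounded chromatic number, so Theorem \ref{thm:boundedAlpha} bounds its own chromatic number in terms of $t$ and $\alpha=1$; this forces $\chi(G_T)$ to absorb essentially all of $\chiv(T)$. Conjecture \ref{conj:ee2} then produces $A, B \subseteq V(T)$ of high chromatic number with no $G_T$-edge between them, which means every $T$-arc between $A$ and $B$ is $t$-light.

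The main obstacle I anticipate is upgrading this anticomplete pair in $G_T$ to a genuinely \emph{directed} complete pair in $T$. The absence of $G_T$-edges across the cut only tells us that the arcs between $A$ and $B$ all have small-chromatic neighborhoods; it does not say that they all point the same way. Forcing a consistent orientation — by shrinking $A$ or $B$ and re-applying Lemma \ref{lem:cluster_big} on each side, or by a second application of Theorem \ref{thm:boundedAlpha} to the bipartite-oriented structure across the cut — is the delicate step. Calibrating the induction parameters, so that $\ee$ is ultimately invoked only at values of $(t,c)$ that remain controlled functions of the target constant $c$, is where the bookkeeping is most at risk of losing control.
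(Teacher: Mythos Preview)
Your first paragraph is correct and is all that the statement itself requires: the paper, too, offers no argument for Conjecture~\ref{conj:ee2} beyond labelling it a restatement.

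Since you have also sketched the implication Conjecture~\ref{conj:ee2} $\Rightarrow$ Conjecture~\ref{conj:big_to_big2}, here is where your outline diverges from the paper and runs into a real gap. A small slip first: the light-arc digraph $D_L$ is not a tournament; its independence number is at most $t-1$ (it is the complement of $G_H$, which is $K_t$-free), so Theorem~\ref{thm:boundedAlpha} must be invoked with $\alpha = t-1$, not $\alpha = 1$. This is precisely why the paper needs the digraph theorem and not merely Theorem~\ref{thm:edge_loc_to_glob}.

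More seriously, the paper does \emph{not} apply Conjecture~\ref{conj:ee2} to the global heavy-arc graph $G_H$. Doing so would yield sets $A,B$ with large $\chi(G_H[A]),\chi(G_H[B])$ and only light arcs between them, but (i) large $\chi(G_H[\cdot])$ does not imply large $\chiv(T[\cdot])$, since $G_H$ is not a backedge graph of $T$, and (ii) as you yourself flag, the light arcs across the cut need not point one way. The paper dissolves both problems at once: after colouring $D_L$, it fixes a single colour class $T_i$, takes an ordering of $V(T_i)$ in which all light arcs go forward, and applies Conjecture~\ref{conj:ee2} to the \emph{backedge graph} $G_i$ of that ordering (a subgraph of $G_H$, hence still $K_t$-free). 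An anticomplete pair $(S_1,S_2)$ in $G_i$ then has no backward $T_i$-arcs between its parts, so an initial segment of one and a terminal segment of the other, with respect to the ordering, give $A' \Rightarrow B'$ automatically; and because $G_i$ is a genuine backedge graph, the inequality $\chi(G_i[A']) \leq \omega(G_i)\,\chiv(T_i[A']) \leq t\,\chiv(T_i[A'])$ converts the chromatic numbers. Your proposed second pass with Theorem~\ref{thm:boundedAlpha} or Lemma~\ref{lem:cluster_big} across the cut has no such ordering available and does not recover either point.
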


\begin{conjecture}\label{conj:big_to_big1}
There exists a function $\nss$ such that for every pair of integers
$t,c \geq 1$, if a tournament $T$ satisfies $\chiv(T) \geq \nss(t,c)$
and $T$ contains no $t$-cluster, then there are subsets $A,B \subseteq
V(T)$ such that $\chiv(T[A]), \chiv(T[B]) \geq c$ and all arcs between
$A$ and $B$ go from vertices in $A$ to vertices in $B$.
\end{conjecture}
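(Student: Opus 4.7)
The plan is to prove Conjecture~\ref{conj:big_to_big1} from Conjecture~\ref{conj:ee2} by induction on $t$, using Lemma~\ref{lem:cluster_big}, Theorem~\ref{thm:edge_loc_to_glob}, and Theorem~\ref{thm:boundedAlpha}. If $T$ contains a $2c$-cluster and $\chiv(T)\geq\clust(c)$, Lemma~\ref{lem:cluster_big} immediately yields the sought pair, so I may assume $T$ has no $2c$-cluster; after replacing $t$ with $\max(t,2c)$ I may also assume $t\geq 2c$, and the base cases for small $t$ are straightforward.

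For the inductive step, I split into two cases. If some $t$-light arc $e$ satisfies $\chiv(T[N(e)])\geq\nss(t-1,c)$, then because $e$ is $t$-light, $T[N(e)]$ contains no $(t-1)$-cluster, and the inductive hypothesis applied to $T[N(e)]$ at parameters $(t-1,c)$ yields sets $A,B\subseteq N(e)\subseteq V(T)$ with the required properties. Otherwise every $t$-light arc satisfies $\chiv(T[N(e)])<\nss(t-1,c)$, and I introduce the auxiliary graph $G$ on $V(T)$ whose edges are pairs joined by a $t$-heavy arc. Any $G$-independent set $I$ has $T[I]$ with all internal arcs $t$-light, so $T[I]$ is $\nss(t-1,c)$-arc-bounded and $\chiv(T[I])\leq f(\nss(t-1,c))$ by Theorem~\ref{thm:edge_loc_to_glob}; consequently $\chi(G)\geq\chiv(T)/f(\nss(t-1,c))$, which is large.

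The technical heart is a \emph{cluster-extraction lemma} asserting $\omega(G)\leq\phi(t)$ for some explicit $\phi$. Given a sufficiently large clique $K$ in $G$, tournament Ramsey extracts a transitive subtournament $v_0\to v_1\to\cdots\to v_t$ in $K$, all of whose arcs are $t$-heavy. For each $k$ choose a $(t-1)$-cluster $C_k\subseteq N(v_0v_k)$, and set $S=\{v_0,\ldots,v_t\}\cup\bigcup_k C_k$; by the recursion $\s(t)\geq t^2\s(t-1)+t$, one has $|S|\leq(t+1)+t\cdot\s(t-1)\leq\s(t)$. To show $\chiv(T[S])\geq t$, observe that in any hypothetical $(t-1)$-coloring of $T[S]$, each $C_k$ must use all $t-1$ colors, so the color $a$ of $v_0$ appears in every $C_k$; then $v_0$ cannot share its color with any $v_k$, since for $c\in V_a\cap C_k$ the triangle $v_0\to v_k\to c\to v_0$ would be monochromatic. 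Iterating the pigeonhole on $v_1,\ldots,v_t$ forces a further forbidden monochromatic triangle, contradicting the coloring and producing a $t$-cluster in $T$---which is impossible.

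Once $\omega(G)<\phi(t)$, Conjecture~\ref{conj:ee2} applied to $G$ yields disjoint $A,B\subseteq V(T)$ with large $\chi(G[A]),\chi(G[B])$ and no $G$-edge between $A$ and $B$, i.e., every $A$--$B$ arc is $t$-light. The final step is to convert this into $\chiv(T[A]),\chiv(T[B])\geq c$ together with $A\Rightarrow B$. Since $\chi(G[\,\cdot\,])$ and $\chiv(T[\,\cdot\,])$ are not directly comparable, this requires a further argument: I would apply Theorem~\ref{thm:boundedAlpha} to the bipartite light-arc subdigraph between $A$ and $B$ (whose arc neighborhoods have bounded chromatic number by the current case) to extract subsets $A'\subseteq A$, $B'\subseteq B$ with $A'\Rightarrow B'$, and recursively invoke the inductive hypothesis inside $T[A']$ and $T[B']$ to secure the chromatic lower bounds. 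The main obstacles are the cluster-extraction lemma, requiring careful control of how the transitive chain interacts with the nested $(t-1)$-clusters---essentially using the recursion on $\s$---and the final orientation-refinement step, which converts the merely-light condition on $A$--$B$ arcs into a clean homogeneous orientation.
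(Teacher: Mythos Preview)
Your overall architecture---induction on $t$, the heavy/light arc dichotomy, bounding $\omega$ of the heavy graph, and then invoking Conjecture~\ref{conj:ee2}---is the same as the paper's. But two steps do not go through as written.

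\textbf{The cluster-extraction lemma.} Your set $S$ only contains the clusters $C_k\subseteq N(v_0v_k)$ for the arcs emanating from $v_0$. Your argument correctly shows that in a $(t-1)$-coloring the color of $v_0$ is excluded from $v_1,\dots,v_t$, but then ``iterating the pigeonhole'' stalls: when $v_i$ and $v_j$ share a color $b$, you have no cluster inside $N(v_iv_j)\cap S$ with which to produce a monochromatic triangle. The paper's fix is simpler than your Ramsey detour: include the $(t-1)$-cluster for \emph{every} arc of the $K_t$ in $G_H$ (there are $\binom{t}{2}\leq t^2$ of them, and the recursion $\s(t)\geq t^2\s(t-1)+t$ is designed exactly for this). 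Then any two clique vertices sharing a color yield a monochromatic triangle immediately, so all $t$ clique vertices get distinct colors. No transitivity, no Ramsey, and one gets $\omega(G_H)\leq t-1$ directly.

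\textbf{The orientation step.} This is the more serious gap. You apply Conjecture~\ref{conj:ee2} to the full heavy graph $G$ and obtain $A,B$ with only light arcs between them; but ``all $A$--$B$ arcs are light'' gives no orientation information, and Theorem~\ref{thm:boundedAlpha} is a coloring statement, not an extraction tool---it will not hand you $A'\Rightarrow B'$. Moreover, large $\chi(G[A])$ does not force large $\chiv(T[A])$: $T[A]$ could be transitive while every arc in it is heavy (the witnessing $(t-1)$-clusters may lie outside $A$), so $\chi(G[A])=|A|$ with $\chiv(T[A])=1$. The paper's key idea, which you are missing, is to \emph{first} apply Theorem~\ref{thm:boundedAlpha} to the light-arc digraph $D_L$ (which has independence number $\leq t-1$ and arc-neighborhoods of bounded $\chiv$) to obtain a coloring in which each color class $T_i$ has its light arcs forming a DAG. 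This produces a linear order on $V(T_i)$ in which every backward arc is heavy; the backedge graph $G_i$ is then a $K_t$-free subgraph of $G_H$, and one applies Conjecture~\ref{conj:ee2} to $G_i$ rather than to $G$. Now ``no $G_i$-edges between $S_1$ and $S_2$'' means ``all $T_i$-arcs between $S_1$ and $S_2$ go forward in the order'', and a threshold split along the order yields $A'\Rightarrow B'$. Finally, the inequality $\chi(G_i[\cdot])\leq\omega(G_i)\cdot\chiv(T_i[\cdot])$ for backedge graphs converts the $\chi(G_i)$ lower bounds into the required $\chiv$ lower bounds (this is why the paper takes $c_2=2tc$).
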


Conjecture \ref{conj:big_to_big1} may seem weaker than Conjecture
\ref{conj:big_to_big}, but is in fact equivalent. This is a direct
consequence of Lemma \ref{lem:cluster_big}.  Indeed, for any $c$, if a
tournament $T$ has no sets $A$ and $B$ with $A$ complete to $B$ and
$\chiv(T[A]), \chiv(T[B]) \geq c$, then by the contrapositive of Lemma
\ref{lem:cluster_big} it has no $2c$-cluster or it has chromatic
number less than $\clust(c)$.  Therefore, Conjecture
\ref{conj:big_to_big1} will imply that $T$ has chromatic number
strictly less than $d=\max(\nss(2c,c),\clust(c))$, which is some
constant since $c$ is fixed. This is exactly the contrapositive of
Conjecture \ref{conj:big_to_big}.  We now state the contrapositive of
Conjecture \ref{conj:big_to_big1}, which is also equivalent to
Conjecture \ref{conj:big_to_big}.

\begin{conjecture}[Restatement of Conjecture
    \ref{conj:big_to_big1}]\label{conj:big_to_big2} There exists a
  function $\nss$ such that for every pair of integers $t,c \geq 1$,
  if a tournament $T$ contains no $t$-cluster and $T$ does not contain
  subsets $A,B \subseteq V(T)$ such that $\chiv(T[A]), \chiv(T[B])
  \geq c$ with all arcs between $A$ and $B$ going from vertices in $A$
  to vertices in $B$, then $\chiv(T) \leq \nss(t,c)$.
  \end{conjecture}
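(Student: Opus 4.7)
The plan is to prove Conjecture~\ref{conj:big_to_big2} from Conjecture~\ref{conj:ee2} by induction on $t$, using Theorem~\ref{thm:boundedAlpha} as the central tool on an auxiliary subdigraph. The base case $t=2$ is trivial: with $\s(2)\geq 3$, any tournament with no $2$-cluster is acyclic, so $\chiv(T)=1$ and we set $\nss(2,c)=1$. For the inductive step, assume $\nss(t-1,c)$ exists for every $c$, and let $T$ be a tournament with no $t$-cluster and no bad pair $(A,B)$ for parameter $c$. Lemma~\ref{lem:cluster_big} lets me also assume that $T$ has no $2c$-cluster, as otherwise $\chiv(T)<\clust(c)$ and we are done.

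Form the undirected graph $H$ on $V(T)$ whose edges are exactly the pairs $\{u,v\}$ such that the $T$-arc between them is $t$-heavy, and let $D$ be the subdigraph of $T$ whose arcs are the $t$-light arcs; by construction $\alpha(D)=\omega(H)$. For each light arc $e$, $T[N(e)]$ contains no $(t-1)$-cluster and inherits the no-bad-pair property, so by induction $\chiv(T[N(e)])\leq \nss(t-1,c)$ and thus $\chiv(D[N_D(e)])\leq \nss(t-1,c)$. Theorem~\ref{thm:boundedAlpha} then yields $\chiv(D)\leq \h(\nss(t-1,c),\omega(H))$. Combining any $\chiv(D)$-coloring of $D$ with any $\chi(H)$-coloring of $H$ is an acyclic coloring of $T$—each common colour class avoids both $D$-cycles and all heavy arcs—so $\chiv(T)\leq \chiv(D)\cdot \chi(H)$. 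The remaining task is to bound $\omega(H)$ and $\chi(H)$ by functions of $t$ and $c$.

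For $\omega(H)$, take a clique $K$ in $H$ and, for each arc $v_iv_j$ of $T[K]$, a witnessing $(t-1)$-cluster $S_{ij}\subseteq N(v_iv_j)$; set $S=\bigcup_{ij}S_{ij}$. In any $(t-1)$-coloring of $T[K\cup S]$, each $S_{ij}$ uses all $t-1$ colours since $\chiv(T[S_{ij}])\geq t-1$, so it contains some $s$ with the same colour as $v_i$; if $c(v_i)=c(v_j)$, then $v_i\to v_j\to s\to v_i$ is a monochromatic $3$-cycle, a contradiction. Hence any two vertices of $K$ receive distinct colours in a $(t-1)$-coloring, so $|K|=t$ forces $\chiv(T[K\cup S])\geq t$, and since $|K\cup S|\leq |K|+\binom{|K|}{2}\s(t-1)\leq \s(t)$ (by the inequality $\s(x)\geq x^2\s(x-1)+x$ imposed on $\s$), this produces a $t$-cluster in $T$, contradicting the hypothesis. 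Thus $\omega(H)\leq t-1$, and Conjecture~\ref{conj:ee2} yields $\chi(H)\leq \ee(t,c^\ast)$ for an appropriate $c^\ast=c^\ast(c)$, provided $H$ contains no bad pair $(A',B')$ with $\chi(H[A']),\chi(H[B'])\geq c^\ast$.

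The main obstacle is to verify this last condition. Given a putative bad pair in $H$—so that all $T$-arcs between $A'$ and $B'$ are $t$-light but of unspecified direction—the plan is to iteratively refine $A'$ and $B'$ via a pigeonhole argument on the directions of those light arcs, combined with repeated applications of Lemma~\ref{lem:cluster_big}, to extract $A\subseteq A'$ and $B\subseteq B'$ with $A\Rightarrow B$ and $\chiv(T[A]),\chiv(T[B])\geq c$, contradicting the hypothesis on $T$. The delicate point is that $\chi(H[A'])$ being large does not by itself imply $\chiv(T[A'])$ is large—$T[A']$ could be near-transitive while $H[A']$ has high chromatic number—so the refinement must either track both chromatic quantities through the induction (by strengthening the inductive statement) or rely on a cleverer definition of $H$ ensuring that a bad pair in $H$ automatically witnesses a high-$\chiv$ subset of $T$.
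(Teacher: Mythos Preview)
Your setup matches the paper's almost exactly—the heavy/light partition, the bound $\omega(H)\leq t-1$, the application of Theorem~\ref{thm:boundedAlpha} to the light-arc digraph $D$ with independence number at most $t-1$, and even the product inequality $\chiv(T)\leq \chiv(D)\cdot\chi(H)$. The gap is precisely the one you flag at the end: you cannot bound $\chi(H)$ by applying Conjecture~\ref{conj:ee2} to $H$ directly, because a non-adjacent pair $(A',B')$ in $H$ tells you only that the $T$-arcs between $A'$ and $B'$ are light, not that they are oriented consistently, and because $H$ is not a backedge graph of $T$, so $\chi(H[A'])$ large gives no control over $\chiv(T[A'])$. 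Your sketched ``iterative pigeonhole refinement'' does not supply the missing structure, and there is no evident way to make it work.

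The paper's fix is to \emph{not} bound $\chi(H)$ globally. Instead, fix a proper $\chiv(D)$-coloring of $D$ and look at one color class $T_i$ at a time. Since the light arcs inside $T_i$ are acyclic, $T_i$ admits a linear ordering $v_1,\dots,v_n$ in which every light arc is forward; let $G_i$ be the backedge graph of $T_i$ for this ordering. Then $G_i\subseteq H$, so $G_i$ is $K_t$-free, and one applies Conjecture~\ref{conj:ee2} to $G_i$ with parameter $c_2=2tc$. If the conjecture returns $S_1,S_2$ with no $G_i$-edges between them and $\chi(G_i[S_j])\geq 2tc$, the linear order lets you take an initial segment $A'$ of $S_1$ and a final segment $B'$ of $S_2$ (or vice versa) with $A'$ entirely preceding $B'$; since there are no backward arcs between $S_1$ and $S_2$, every arc between $A'$ and $B'$ is forward, i.e.\ $A'\Rightarrow B'$ in $T$. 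The backedge-graph inequality $\chi(G_i[A'])\leq \omega(G_i)\cdot\chiv(T_i[A'])$ then converts $\chi(G_i[A'])\geq tc$ into $\chiv(T[A'])\geq c$, yielding the forbidden pair. Hence $\chi(G_i)\leq \ee(t,2tc)$, and since $\chiv(T_i)\leq\chi(G_i)$ for a backedge graph, one obtains $\chiv(T)\leq \chiv(D)\cdot\ee(t,2tc)$. The linear ordering coming from the $D$-coloring is exactly the missing ingredient that simultaneously resolves both obstacles you identified.
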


\begin{proof}[Proof of Conjecture \ref{conj:big_to_big2}, assuming
Conjecture \ref{conj:ee2}]

For $t=2$, a tournament $T$ with no $2$-cluster does not contain a
directed triangle and therefore has $\chiv(T) = 1$.  Thus, we have
$\nss(2,c) = 1$.  Now we assume that $\nss(t-1,c)$ exists.  We will
prove that $\nss(t,c)$ exists.

We consider a tournament $T$, which by assumption does not contain a
$t$-cluster.  Since $t$ is now fixed for the rest of this proof, we
simply use heavy and light in place of $t$-heavy and $t$-light.  Let
$L$ be the set of light arcs and $H$ the set of heavy arcs. Notice
that every arc in $T$ must be either in $L$ or in $H$.  Let $D_H =
(V,H)$ and $D_L = (V,L)$ be digraphs containing the heavy and light
arcs, respectively.  Let $G_H = (V,H)$ denote the undirected graph of
heavy edges and let $G_L = (V,L)$ denote the undirected graph of light
edges.  (Notice that we are abusing notation by using $H$ and $L$ to
refer to both directed and undirected edge sets.)

Our first claim is that the graph $G_H$ has no large clique,
and consequently, the graph $G_L$ has bounded independence
number.

\begin{claim}\label{clm:heavy_clique}
$\omega(G_H) \leq t-1$.
\end{claim}

\begin{cproof}
Suppose that $G_H$ contains a $K_t$ (i.e., a clique on $t$ vertices) and
let $S$ be the set obtained by including the $t$ vertices of the
clique in addition to the vertices in the $(t-1)$-cluster in the
neighborhood of each arc corresponding to an edge in the clique.  Then
$S$ has at most $t + t^2 \cdot \s(t-1)$ vertices.  Moreover, $T[S]$
cannot be colored with $t-1$ colors since every arc is heavy and the
endpoints of a heavy arc cannot have the same color in any coloring
using only $t-1$ colors.  Since $S$ contains a clique, we have that
$\chi(S) \geq t$.  Thus, $T$ contains a $t$-cluster, which is a
contradiction.\end{cproof}

\begin{claim}\label{clm:light_ind}
$\alpha(G_L) \leq t-1$.
\end{claim}
\begin{cproof}
$L$ and $H$ are complementary edge sets (i.e., every edge not in $L$
  belongs to $H$ and vice versa).  If $G_L$ has an independent set of
  size $t$, then $G_H$ would have a clique on those same $t$ vertices,
  which would contradict Claim \ref{clm:heavy_clique}.
\end{cproof}

\begin{claim}\label{clm:light_edge_neighb}
For every arc $e \in L$, $\chiv(T[N(e)]) \leq \nss(t-1,c)$.
\end{claim}
\begin{cproof}
By definition, the neighborhood of any light arc contains no
$(t-1)$-cluster.  Thus by the induction hypothesis it can be colored
with $\nss(t-1,c)$ colors.
\end{cproof}

It follows immediately that the neighborhood of every arc in $D_L$ has
chromatic number at most $\nss(t-1,c)$.  We can then use Theorem
\ref{thm:boundedAlpha} to show that $D_L$ can be colored with
$\h(\nss(t-1,c))$ of colors.

Fix such a coloring of $D_L$.  Each color induces a tournament that
has a vertex ordering in which each backward arc belongs to $H$
(since all monochromatic arcs with the same color from $L$ form an
acyclic digraph).  Consider the subtournament $T_i$ induced on
vertices with the $i^{th}$ color, let $n$ denote the number of
vertices in this subtournament and fix a vertex ordering $\{v_1,
\ldots, v_n\}$ in which all arcs in $D_L$ are forward.  Let $G_i$ be
the undirected graph on this vertex set whose edge set corresponds to
the backward arcs of $T_i$ with respect to the fixed vertex ordering.
Notice that $G_i$ is a subgraph of $G_H$, which is $K_t$-free by Claim
\ref{clm:heavy_clique}.

Now let us apply Conjecture \ref{conj:ee2} to the graph $G_i$.  Let
$c_2 = 2tc$.  Either each $G_i$ has chromatic number at most $d =
\ee(t,c_2)$ or $G_i$ contains two sets $S_1$ and $S_2$ with
$\chi(G[S_1]),\chi(G[S_2]) \geq c_2$ and with no edges in $G_i$
between $S_1$ and $S_2$. In the latter case, let $a$ be the smallest
index such that $\chi(G[\{v_1, \ldots, v_a\} \cap S_1]) \geq tc$, and
let $b$ be the smallest index such that $\chi(G[\{v_1, \ldots, v_b\}
  \cap S_2]) \geq tc$.  Without loss of generality, assume that $a <
b$.  Now let $A' = \{v_1, \ldots, v_a\} \cap S_1$ and $B' = \{v_{b+1},
\ldots, v_n\}\cap S_2$.  Observe that since $S_1$ and $S_2$ have no
arcs between them in $G_i$, which corresponds to the backedge graph of
$T_i$, then all arcs between $A'$ and $B'$ in $T_i$ must go from $A'$
to $B'$.  Moreover, we have
$\chiv(T_i[A']) \leq \chi(G_i[A']) \leq \omega(G_i[A'])
\chiv(T_i[A'])$.\footnote{This follows from 2.1 in \cite{nguyen2023some},
which says that $\chiv(T) \leq \chi(G) \leq \omega(G)\chiv(T)$ for a
backedge graph $G$ of tournament $T$.} Since $\chi(G_i[A']) \geq tc$
and $\omega(G_i[A']) \leq \omega(G_i) \leq t$, we have $\chiv(T_i[A'])
\geq c$.  Using the same argument, we also have $\chiv(T_i[B']) \geq
c$.  However, by assumption, such sets $A'$ and $B'$ do not exist in
$T$.  So we conclude that we are in the first case, in which
$\chiv(T_i) \leq \chi(G_i) \leq \ee(t,c_2)$.

Thus, we can color the subtournament induced by each color class of
$D_L$ with $\ee(t,2tc)$ colors, resulting in a coloring of $T$ with
$\nss(t,c) = \ee(t,2tc)\cdot \h(\nss(t-1,c))$ colors.
\end{proof}

\section*{Acknowledgements}
We thank Pierre Aboulker and Pierre Charbit for helpful conversations
and comments.  We thank Paul Seymour for pointing out that Theorem
\ref{thm:ours} is a special case of 13.3 in \cite{nguyen2023some}.  We
thank the anonymous referees for their help improving the presentation.

\newcommand{\etalchar}[1]{$^{#1}$}

\setcounter{claim}{0}

\appendix

\section{Proof of Theorem \ref{thm:dom_alpha}}\label{app:dom_lemma}

Let $D$ be a digraph with independence number $\alpha$, and let $X,Y
\subseteq V(D)$.  Then the following inequalities are straightforward.
\begin{eqnarray}
  \gamma(D[N^+[X]]) & \leq & |X|,\nonumber \\
  \gamma(D[Y]) & \leq & \gamma(D[X]) + \gamma(D[Y\setminus X]). \label{clm:inequalities}
\end{eqnarray}

\domLemma*

  \begin{proof}
  Let $P(\alpha,k)$ denote the statement of the theorem for $\alpha$
  and $k$.  Our goal is to prove $P(\alpha,k)$ for all integers
  $\alpha, k \geq 1$.  Let us assume that $P(\alpha-1,k)$ holds for
  all $k \geq 1$.  The base case for this is $P(1,k)$, which is proved
  in \cite{harutyunyan2019locToGlobal}.  Now we fix $\alpha$ and we
  want to prove $P(\alpha,k)$, which we will do by induction on $k$.
  The base case for this is $P(\alpha,1)$, which is true since any digraph with independence number $\alpha$ and domination number at least 1 contains at least one vertex, which serves as a $(1,1)$-cluster.  To build intuition, we can also consider the next case, which is $P(\alpha,2)$.  This is true since any
  digraph with independence number $\alpha$ and domination number at
  least $\alpha+1$ contains a directed cycle of length at most
  $\ell(\alpha,2) \leq 2\alpha+1$, and this cycle requires two
  colors.\footnote{This follows from the well-known classical theorem
  that an acyclic digraph has an independent dominating set.  See
  \cite{bondy2003short}.}  Now we assume $P(\alpha,k-1)$ (as well as
  $P(\alpha-1,k)$) and we want to prove $P(\alpha,k)$.
  
We will follow the proof of Theorem 5 from \cite{harutyunyan2019locToGlobal}. Let us first prove a useful claim.  Recall that $N^o(v)$ is the set of vertices that form non-edges with $v$.

\begin{claim}\label{clm:non_neighborhood}
If $D$ does not contain a $(k, \ell(\alpha-1,k))$-cluster,
  then for any vertex $v \in V(D)$,
\begin{equation*} \gamma(D[N^o(v)]) \leq K(\alpha-1,k). \end{equation*}
\end{claim}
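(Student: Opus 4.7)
The plan is to prove the claim by a direct contrapositive argument that invokes the outer induction hypothesis $P(\alpha-1,k)$, which is already in force at this point of the proof. The key observation is that restricting to the non-neighborhood of $v$ strictly decreases the independence number: by the definition of $N^o(v)$, the vertex $v$ has neither an out-arc nor an in-arc to any vertex of $N^o(v)$, so for any independent set $I$ in $D[N^o(v)]$, the set $I \cup \{v\}$ is independent in $D$. Since $D$ has independence number $\alpha$, this forces the independence number of $D[N^o(v)]$ to be at most $\alpha - 1$.

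Now I would argue by contradiction. Suppose $\gamma(D[N^o(v)]) \geq K(\alpha-1,k) + 1$. Applying $P(\alpha-1,k)$ to $D[N^o(v)]$, which has independence number at most $\alpha-1$ and domination number at least $K(\alpha-1,k)$, we obtain a subset $S \subseteq N^o(v)$ that is a $(k,\ell(\alpha-1,k))$-cluster in $D[N^o(v)]$. Since the induced subdigraph $D[S]$ is identical to $(D[N^o(v)])[S]$, the set $S$ is also a $(k,\ell(\alpha-1,k))$-cluster in $D$, contradicting the hypothesis that $D$ contains no such cluster. Hence $\gamma(D[N^o(v)]) \leq K(\alpha-1,k)$ as claimed.

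There is essentially no obstacle in this short argument; both the drop in independence number and the preservation of the cluster property under restriction to an induced subdigraph are immediate from the definitions. I expect this claim to serve as a bootstrap in the remainder of the proof of $P(\alpha,k)$, combining with the trivial bound $\gamma(D[N^+[X]]) \leq |X|$ and the subadditivity of $\gamma$ to build a small dominating set in $D$, thereby driving the inner induction on $k$, in parallel with the proof of Theorem 5 of \cite{harutyunyan2019locToGlobal}.
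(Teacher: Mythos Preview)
Your proof is correct and follows essentially the same approach as the paper: both observe that $D[N^o(v)]$ has independence number at most $\alpha-1$ and then apply the outer induction hypothesis $P(\alpha-1,k)$ to conclude that either a $(k,\ell(\alpha-1,k))$-cluster exists or the domination number is bounded. Your version is simply more explicit about why the independence number drops (via the $I \cup \{v\}$ argument) and why a cluster in $D[N^o(v)]$ is also a cluster in $D$, details the paper leaves implicit.
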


\begin{cproof}
The digraph $D' = D[N^o(v)]$ has independence number $\alpha-1$.  By
the inductive hypothesis on $\alpha$, either $D'$ has a $(k,
\ell(\alpha-1,k))$-cluster or $D'$ has domination number at most
$K(\alpha-1,k)$.  Thus, $\gamma(D[N^o(v)]) \leq K(\alpha-1,k)$.
\end{cproof}

Let $D=(V,E)$ be a digraph with independence number $\alpha$ such that
$\gamma(D) \geq K(\alpha,k)$, and let $B$ be a minimum dominating set
of $D$.  We will assume that $D$ does not contain a $(k,
\ell(\alpha-1,k))$-cluster, since otherwise, we would be done.
Fix $$K(\alpha,k) = k
(K(\alpha-1,k) + 1)(K(\alpha,k-1)+\ell(\alpha,k-1)\cdot
(K(\alpha-1,k)+1)+1)+K(\alpha,k-1).$$
Consider a subset
$W$ of $B$, where
$$|W| = k(K(\alpha,k-1)+\ell(\alpha,k-1)\cdot
(K(\alpha-1,k)+1)+1).$$
From \eqref{clm:inequalities} and
Claim \ref{clm:non_neighborhood}, we have
\begin{eqnarray*}
\gamma(D[V \setminus (N^+[W] \cup
N^o(W))]) & \geq & \gamma(D) - \gamma(D[N^+[W]]) - \gamma(D[N^o(W)])\\ & \geq &
\gamma(D) -|W| - |W|(K(\alpha -1,k)\\
& \geq &
K(\alpha,k)-|W|(K(\alpha-1,k)+1)\\ & \geq & K(\alpha,k-1).
\end{eqnarray*}

By applying the induction hypothesis, the digraph
$D[V\setminus{(N^+[W] \cup N^o(W))}]$ contains a $(k-1,
\ell(\alpha,k-1))$-cluster.  Call this vertex set $A$.  Note that by
construction, $A \cap W = \emptyset$ and $A$ is complete towards $W$.
Now consider a subset $S$ of $W$ where
$$|S| = K(\alpha,k-1)+\ell(\alpha,k-1)\cdot (K(\alpha-1,k)+1)+1.$$
We claim
that
\begin{eqnarray}
\gamma(D[N^+(S)]) \geq K(\alpha,k-1) +
\ell(\alpha,k-1)\cdot(K(\alpha-1,k)+1).\label{Splus}
\end{eqnarray}
If not, we can choose a
dominating set $S'$ of $N^+(S)$, where
$$|S'| \leq K(\alpha,k-1)+\ell(\alpha,k-1)\cdot(K(\alpha-1,k)+1)-1.$$  Note that
$x$ dominates $S$ for any $x \in A$, and so $S' \cup \{x\}$ dominates
$N^+[S]$. Hence $(B\setminus S) \cup S' \cup \{x\}$ would be a
dominating set of $D$ of size less than $|B|$ which contradicts the
minimality of $B$.  We therefore conclude that Inequality \eqref{Splus} holds.

	Let $N' = N^+(S) \setminus (N^+(A) \cup N^o(A))$. From Claims 
	\ref{clm:inequalities} and \ref{clm:non_neighborhood} we have
        \begin{eqnarray*}
	\gamma(D[N']) & \geq & \gamma(D[N^+(S)]) - \gamma(D[N^+(A)]) - \gamma(D[N^o(A)])\\ & \geq&  K(\alpha,k-1) +
	\ell(\alpha,k-1)\cdot(K(\alpha-1,k)+1) - |A|(K(\alpha-1,k)+1)\\ & =& K(\alpha,k-1).
        \end{eqnarray*}
Thus, by the induction hypothesis on $k$, there is a subset $A_s
\subseteq N'$ that forms a $(k-1, \ell(\alpha, k-1))$-cluster.
By construction, $A_S \cap A = \emptyset$ and $A_S$ is complete
towards $A$.

We now construct our subdigraph of $D$ with chromatic number at
least $k$.
We consider the set of vertices $A \cup W$ to
which we add the collection $A_S$, for all subsets $S \subseteq W$
of size $K(\alpha,k-1)+\ell(\alpha,k-1)\cdot
(K(\alpha-1,k)+1)+1$. Call $A'$ this new vertex set and observe that
its size is at most
$$|A'| \leq |A| + |W| + |A_S| {|W| \choose |S|}.$$
So we have
\begin{eqnarray*}
\ell(\alpha,k)  = & & \ell(\alpha,k-1) +
k(K(\alpha,k-1)+\ell(\alpha,k-1)\cdot (K(\alpha-1,k)+1)+1) \\ & +&
\ell(\alpha,k-1)\binom{k(K(\alpha,k-1)
   + \ell(\alpha,k-1)\cdot
  (K(\alpha-1,k)+1)+1)} {K(\alpha,k-1)+\ell(\alpha,k-1)\cdot
  (K(\alpha-1,k)+1)+1}.
\end{eqnarray*}
To conclude, it is sufficient to show that $\chi(A') \geq k$. Suppose
not, and for contradiction, take a $(k-1)$-coloring of $A'$. Since
$|W| = k(K(\alpha,k-1)+\ell(\alpha,k-1)\cdot (K(\alpha-1,k)+1)+1)$
there is a monochromatic set $S$ in $W$ of size
$K(\alpha,k-1)+\ell(\alpha,k-1)\cdot (K(\alpha-1,k)+1)+1$ (say,
colored 1). Recall that $A_S$ is complete to $A$, and $A$ is complete
to $S$, and note that since $\chi(A) \geq k-1$ and $\chi(A_S) \geq
k-1$, both $A$ and $A_S$ have a vertex of each of the $k-1$
colors. Hence there are $u \in A$ and $w \in A_S$ colored 1. Since
$A_S \subseteq N^+(S)$, there is $v \in S$ such that $(v,w)$ is an arc
of $D$. We then obtain the monochromatic triangle $(u,v,w)$ of color
1, a contradiction. Thus, $\chiv(D[A']) \geq k$ implying that $A'$ is
a $(k,\ell(\alpha,k))$-cluster in $D$ completing the induction
on $k$.

Since this induction proves the statement $P(\alpha,k)$ holds for any $k$, it
proves the inductive hypothesis for $\alpha$.
Then, by induction on $\alpha$ we have proven that the theorem is true
for any pair of integers $\alpha, k$.
\end{proof}



\begin{thebibliography}{HLTW19}

\bibitem[AAC24]{aboulker2022heroes}
Pierre Aboulker, Guillaume Aubian, and Pierre Charbit.
\newblock Heroes in oriented complete multipartite graphs.
\newblock {\em Journal of Graph Theory}, 105(4):652--669, 2024.

\bibitem[APS01]{alon2001ramsey}
Noga Alon, J{\'a}nos Pach, and J{\'o}zsef Solymosi.
\newblock Ramsey-type theorems with forbidden subgraphs.
\newblock {\em Combinatorica}, 21(2):155--170, 2001.

\bibitem[BCC{\etalchar{+}}13]{berger2013tournaments}
Eli Berger, Krzysztof Choromanski, Maria Chudnovsky, Jacob Fox, Martin Loebl,
  Alex Scott, Paul Seymour, and St{\'e}phan Thomass{\'e}.
\newblock Tournaments and colouring.
\newblock {\em Journal of Combinatorial Theory, Series B}, 103(1):1--20, 2013.

\bibitem[Bon03]{bondy2003short}
J.~Adrian Bondy.
\newblock Short proofs of classical theorems.
\newblock {\em Journal of Graph Theory}, 44(3):159--165, 2003.

\bibitem[Des54]{descartes1954k}
Blanche Descartes.
\newblock $k$-chromatic graphs without triangles.
\newblock {\em American Mathematical Monthly}, 61(5):352--353, 1954.

\bibitem[EE85]{el1985existence}
Mohamed {El-Zahar} and Paul Erd{\H{o}}s.
\newblock On the existence of two non-neighboring subgraphs in a graph.
\newblock {\em Combinatorica}, 5:295--300, 1985.

\bibitem[EH89]{erdos1989ramsey}
Paul Erd\H{o}s and Andr{\'a}s Hajnal.
\newblock Ramsey-type theorems.
\newblock {\em Discrete Applied Mathematics}, 25(1-2):37--52, 1989.

\bibitem[HLNT19]{harutyunyan2019coloring}
Ararat Harutyunyan, Tien-Nam Le, Alantha Newman, and St{\'e}phan Thomass{\'e}.
\newblock Coloring dense digraphs.
\newblock {\em Combinatorica}, 39:1021--1053, 2019.

\bibitem[HLTW19]{harutyunyan2019locToGlobal}
Ararat Harutyunyan, Tien-Nam Le, St{\'e}phan Thomass{\'e}, and Hehui Wu.
\newblock Coloring tournaments: {F}rom local to global.
\newblock {\em Journal of Combinatorial Theory, Series B}, 138:166--171, 2019.

\bibitem[KN23]{klingelhoefer2023coloring}
Felix Klingelhoefer and Alantha Newman.
\newblock Coloring tournaments with few colors: {A}lgorithms and complexity.
\newblock In {\em Proceedings of 31st Annual European Symposium on Algorithms
  (ESA)}, 2023.

\bibitem[NSS23]{nguyen2023some}
Tung Nguyen, Alex Scott, and Paul Seymour.
\newblock Some results and problems on tournament structure.
\newblock {\em arXiv preprint arXiv:2306.02364}, 2023.

\bibitem[NSS24]{nguyen2023problem}
Tung Nguyen, Alex Scott, and Paul Seymour.
\newblock On a problem of {E}l-{Z}ahar and {Erd\H{o}s}.
\newblock {\em Journal of Combinatorial Theory, Series B}, 165:211--222, 2024.

\end{thebibliography}
\end{document}